\newtheorem{thm}{Theorem}[section]
\newtheorem*{thm*}{Theorem}
\newtheorem{lem}[thm]{Lemma}
\newtheorem{cor}[thm]{Corollary}
\theoremstyle{definition}
\newtheorem{defn}[thm]{Definition}
\newtheorem{ques}[thm]{Question}
\theoremstyle{remark}
\newtheorem{rem}[thm]{Remark}
\numberwithin{equation}{section}
\newcommand{\id}{{\rm id}}
\renewcommand{\int}{\mathop{\rm int}}
\renewcommand{\epsilon}{\varepsilon}
\begin{document}

\title{Estimating the higher symmetric topological complexity of spheres}

\author{Roman~Karasev}
\address{Roman Karasev, Dept. of Mathematics, Moscow Institute of Physics and Technology, Institutskiy per. 9, Dolgoprudny, Russia 141700}
\address{Roman Karasev, Laboratory of Discrete and Computational Geometry, Yaroslavl' State University, Sovetskaya st. 14, Yaroslavl', Russia 150000}
\email{r\_n\_karasev@mail.ru}
\urladdr{http://www.rkarasev.ru/en/}
\thanks{The research of R.N.~Karasev is supported by the Dynasty Foundation, the President's of Russian Federation grant MK-113.2010.1, the Russian Foundation for Basic Research grants 10-01-00096 and 10-01-00139, the Federal Program ``Scientific and scientific-pedagogical staff of innovative Russia'' 2009--2013, and the Russian government project 11.G34.31.0053.}

\author{Peter~Landweber}
\address{Peter Landweber, Department of Mathematics, Rutgers University, Piscataway, NJ 08854, USA} 
\email{landwebe@math.rutgers.edu}

\keywords{topological complexity, configuration spaces}
\subjclass[2010]{55R80, 55R91}

\begin{abstract}
We study questions of the following type: Can one assign continuously and $\Sigma_m$-equivariantly to any $m$-tuple of distinct points on the sphere $S^n$ a multipath in $S^n$ spanning these points? A \emph{multipath} is a continuous map of the wedge of $m$ segments to the sphere. This question is connected with the \emph{higher symmetric topological complexity} of spheres, introduced and studied by I.~Basabe, J.~Gonz\'alez, Yu.~B.~Rudyak, and D.~Tamaki.  In all cases we can handle, the answer is negative.  Our arguments are in the spirit of the definition of the Hopf invariant of a map $f: S^{2n-1} \to S^n$ by means of the mapping cone and the cup product.
\end{abstract}

\maketitle

\section{Introduction}

Let us begin with some definitions following I.~Basabe, J.~Gonz\'alez, Yu.~B.~Rudyak, and D.~Tamaki~\cite{bgrt2010}. For $m \geq 2,$ denote by $J_m$ the space obtained by gluing $m$ segments $I_i = [0,1]_i$ $(i=1, \ldots, m)$ together at $t=0$. Denote the set of $m$ right ends (for $t=1$) of these segments by $\partial J_m$.

\begin{defn}
For a topological space $X$ let $P_m(X)$ be the space of all continuous maps $J_m\to X$ with the compact-open topology.
\end{defn}

\begin{defn}
Consider the fibration
$$
e_{m, X} : P_m (X) \to X^{\times m},
$$ 
which assigns to a map $f : J_m\to X$ the $m$-tuple of points $f(1_i)$ comprising $f(\partial J_m)$. This map is $\Sigma_m$-equivariant with respect to the natural actions of the permutation group $\Sigma_m$ on $P_m(X)$ (by right multiplication) and on $X^{\times m}$.
\end{defn}

The Schwarz genus (see~\cite{schw1966}) of the fibration $e_{m,X} : P_m(X)\to X^{\times m}$ is called the \emph{$m$-th topological complexity} $TC_m(X)$ of $X$ in~\cite{bgrt2010}.  (We always use the ``reduced'' Schwarz genus, so a fibration has a continuous global cross section if and only if its Schwarz genus is $0.$)  Making use of the actions of $\Sigma_m$ and the equivariance of the projection map, the equivariant Schwarz genus (see~\cite{schw1966}) of the fibration $e_{m,X} : P_m(X)\to X^{\times m}$ is called the \emph{$m$-th symmetric topological complexity} $TC_m^{\Sigma}(X)$ of $X$ in~\cite{bgrt2010}.

%RK Changed according to comments of PL 2011-11-03
%start
The study of topological complexity was begun by M.~Farber in~\cite{far2003}, and much of its development is summarized in Farber's book~\cite[Chapter~4]{far2008}. The number $TC_2(X)$ coincides with the Schwarz genus of the fibration $P_2(X) \to X \times X$.  In this case $P_2(X)$ agrees with the space of paths in $X$ because $J_2$ is just a segment.  A section of $e_{2,X}$ over an open subset of $X \times X$ may be viewed as a partial motion planning algorithm, which is a continuous assignment of paths joining certain pairs of points of $X$.  The minimal number minus one of open subsets of $X \times X$ which cover $X \times X$ and over which sections of $e_{2,X}$ exist is the reduced Schwarz genus of the fibration $e_{2,X}$; hence the minimal topological complexity of a motion planning algorithm coincides with $TC_2(X)$. If one requires a motion planning algorithm to be symmetric (the considered open subsets of $X \times X$ contain $(y,x)$ when they contain $(x,y)$, and the path assigned to a pair $(y,x)$ is the reverse of the path assigned to $(x,y)$), then the corresponding topological complexity becomes the symmetric topological complexity $TC^{\Sigma}_2(X)$.

The higher topological complexity $TC_m(X)$ is obtained from motion planning algorithms spanning $m$-tuples of points by a path; it does not really matter whether we span points by a path or by an image of $J_m$ (see~Yu.~Rudyak \cite{ru2010}).  The intuitive meaning of $TC^{\Sigma}_m(X)$ for motion planning algorithms is less clear, since in this case it is essential that  $J_m$ is acted on by the symmetric group, and any ``model'' chosen as a replacement for $J_m$ would also need an action by the symmetric group. 
%end

\begin{defn}
Define the configuration space $F_m(X)$ to be 
$$
F_m(X) = \{(x_1, \ldots, x_m)\in X^{\times m} : x_i \neq x_j \text{ for } i \neq j\}.
%\forall i\neq j\ x_i\neq x_j\}.
$$
\end{defn}

In~\cite[Theorem~4.12]{bgrt2010} Basabe, Gonz\'alez, Rudyak and Tamaki show that lower and upper  estimates of higher symmetric topological complexity involve the equivariant Schwarz genus of the restricted fibrations $\varepsilon_{m, X} := e_{m,X}|_{e_{m,X}^{-1}(F_m(X))}$; it is important that $\Sigma_m$ acts freely on $F_m(X)$ and its preimage under $e_{m,X}$. The first step is to distinguish between (reduced) equivariant Schwarz genus zero and positive equivariant Schwarz genus. This leads to the following question, raised and answered in some particular cases in~\cite{bgrt2010}, and studied further in this paper:

\begin{ques}
\label{mult-path}
Does the fibration $\varepsilon_{m, S^n}$ have a $\Sigma_m$-equivariant section?
\end{ques}

Here we are going to prove:

\begin{thm}
\label{mult-path-2mod4}
If $m\ge 3$ and $n \equiv 2 \pmod 4$, then $\varepsilon_{m, S^n}$ cannot have a $\Sigma_m$-equivariant section.
\end{thm}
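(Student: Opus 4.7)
The strategy is proof by contradiction, in the spirit of the Hopf-invariant construction via mapping cone and cup product. Assume a $\Sigma_m$-equivariant section $s$ of $\varepsilon_{m,S^n}$ exists. First I reduce to the case $m=3$: fixing $m-3$ auxiliary distinct points $p_4,\ldots,p_m \in S^n$ and restricting $s$ to configurations $(x_1,x_2,x_3,p_4,\ldots,p_m)$, then forgetting the last $m-3$ arms of the resulting multipath, gives a $\Sigma_3$-equivariant section of $\varepsilon_{3,S^n}$ over the open subspace $F_3(S^n \setminus \{p_4,\ldots,p_m\})$. From such a section I extract the $\Sigma_3$-invariant central-value map $c(x_1,x_2,x_3) := s(x_1,x_2,x_3)(0)$ (the wedge point of $J_3$ is fixed by permutation of the arms) together with a continuous $\Sigma_3$-equivariant family of arm-paths $\gamma_i : c \to x_i$.

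The next step is to restrict the section to a symmetric subspace $Y \subset F_3(S^n)$ of dimension $2n-1$ carrying a free $\Sigma_3$-action---for instance, the submanifold of ``equilateral inscribed triples'' $Y = \{(x_1,x_2,x_3) : x_1+x_2+x_3 = 0\}$, diffeomorphic to the Stiefel manifold $V_2(\mathbb{R}^{n+1})$ of orthonormal $2$-frames. Using the restricted data $(c|_Y, \gamma_i|_Y)$ together with the circular orbit structure supplied by $\Sigma_3$, one constructs, via a Hopf-type collapsing-and-gluing of the arms, an auxiliary map $\phi$ with target $S^n$ whose mapping cone $C_\phi = S^n \cup_\phi e^{2n}$ captures the geometric obstruction. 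Denoting the generators $a \in H^n(C_\phi;\mathbb{Z})$ and $b \in H^{2n}(C_\phi;\mathbb{Z})$, the cup square reads $a \cup a = H(\phi)\cdot b$, a Hopf-invariant-like integer that is well-defined precisely because $n$ is even.

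The final step is to show that the existence of $\phi$ with the required equivariance, combined with the hypothesis $n \equiv 2 \pmod 4$, is incompatible. I anticipate that the contradiction is not caught by the primary cup-square alone (which would only see the parity of $n$) but rather by a secondary cohomological operation in the Borel construction $Y \times_{\Sigma_3} E\Sigma_3$---a Steenrod-square argument, or a mod-$3$ Steenrod-power argument exploiting the order-$3$ subgroup of $\Sigma_3$ and the $2$-dimensional standard real representation of $\Sigma_3$ that appears in the $\Sigma_3$-equivariant tangent bundle to the orbit $Y$. The principal obstacle, and the technical heart of the proof, is two-fold: first, the explicit Hopf-type construction producing $\phi$ from the arm data $\gamma_i|_Y$ with enough control to read off $H(\phi)$; second, the cohomological computation that singles out the residue class $n \equiv 2 \pmod 4$ rather than merely ``$n$ even'', which I expect to hinge on how the characteristic classes of the standard $\Sigma_3$-representation interact with the cup square on $C_\phi$.
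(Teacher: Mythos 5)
Your proposal shares the paper's general philosophy (mapping cone, cup square, restriction to the space of ``equilateral'' configurations $\cong V_{n+1,2}$), but it has genuine gaps at every critical juncture. First, the reduction to $m=3$ is both unnecessary and broken: fixing $p_4,\ldots,p_m$ only yields a $\Sigma_3$-equivariant section over $F_3(S^n\setminus\{p_4,\ldots,p_m\})$, and this open set does \emph{not} contain your submanifold $Y$ of equilateral triples (some regular triangles have a vertex at a puncture), so the subsequent restriction to $Y\cong V_{n+1,2}$ is not available; removing the offending triangles destroys exactly the top and middle cohomology on which the cup-square pairing lives. The paper avoids this by working for every $m$ directly with the space $D_m(S^n)$ of regular $m$-gons, which is invariant under the dihedral subgroup $D_{2m}\subset\Sigma_m$, and the integer $m$ is needed, not discarded, in the final arithmetic. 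Second, the heart of the argument is missing from your sketch: the equivariance is exploited not by a Borel construction or a ``collapsing-and-gluing'' of arms, but by factoring $h$ through the quotient and comparing the \emph{two} mapping cones $X=S^n\cup_h CD_m(S^n)$ and $X'=S^n\cup_{h'}C(D_m(S^n)/D_{2m})$. Upstairs $X$ is the Thom space of $TS^n$, so $x\smile x=2y$ (the Euler number), while the $2m$-fold covering gives $\pi^*(y')=2my$ on the top class; writing $\pi^*(x')=\pm tx$ and $x'\smile x'=cy'$ yields the key equation $t^2=mc$.

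Third, your anticipated mechanism for isolating the residue $n\equiv 2\pmod 4$ is not the right one. No Steenrod powers or secondary operations are involved here (those enter the paper only for $n\equiv 0\pmod 4$, and a mod-$3$ argument through the cyclic subgroup alone cannot see the difference, since $H^{2k}(\mathbb Z_3;\mathbb F_3)\neq 0$ for all $k$). What does the work is Handel's computation of $H^*(D_{2m};\mathbb Z)$: the odd-primary part is concentrated in degrees $\equiv 0\pmod 4$, essentially because the reflection in $D_{2m}$ acts by $-1$ on $H^2$ of the cyclic subgroup, so only even powers of the degree-$2$ generator survive. Hence for $n\equiv 2\pmod 4$ the groups $H^{n-1}$ and $H^n$ of $D_m(S^n)/D_{2m}$ are $2$-primary, forcing $t$ to be a power of $2$; then $t^2=mc$ forces $m$ to be a power of $2$, and a further torsion bound ($H^n(D_m(S^n)/D_{2m})$ is $4$-torsion for even $m$, $2$-torsion for $m=4$) eliminates $m=4$ and with it all remaining cases. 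Without the quotient mapping cone, the covering degree $2m$, and the dihedral (not merely cyclic) group cohomology, the proof does not close.
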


\begin{thm}
\label{tri-path-0mod4}
If $n \equiv 0 \pmod 4, \, n>4,$ and $n/4$ is not a power of $3$, then $\varepsilon_{3, S^n}$ cannot have a $\Sigma_3$-equivariant section.
\end{thm}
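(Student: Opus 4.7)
The plan is to carry out an odd-primary Hopf-invariant argument at the prime $p=3$, parallel to the mod-$2$ argument required for Theorem~\ref{mult-path-2mod4}. Assume for contradiction that a $\Sigma_3$-equivariant section $s$ of $\varepsilon_{3,S^n}$ exists. Taking the image of the central vertex of each multipath, one obtains a $\Sigma_3$-invariant continuous map $\phi\colon F_3(S^n) \to S^n$ given by $\phi(x_1,x_2,x_3) = s(x_1,x_2,x_3)(0_{J_3})$; invariance follows because $0 \in J_3$ is a fixed point of the $\Sigma_3$-action on $J_3$. The section also records a $\Sigma_3$-equivariant continuous choice of paths joining $\phi(x_1,x_2,x_3)$ to each vertex $x_i$, which is the auxiliary homotopical data needed for the Hopf-invariant computation below.

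Next, restrict $\phi$ together with the path data to the $\Sigma_3$-invariant subspace $R \subset F_3(S^n)$ of equilateral triangles centered at the origin of the ambient $\mathbb{R}^{n+1}$. Parametrizing such a triangle by its first vertex together with the unit tangent direction pointing toward the second vertex identifies $R$ with the Stiefel manifold $V_2(\mathbb{R}^{n+1})$, on which the cyclic subgroup $\mathbb{Z}/3 \subset \Sigma_3$ acts freely by rotating the orthonormal $2$-frame by $2\pi/3$. Since $n$ is even, the integral cohomology of $V_2(\mathbb{R}^{n+1})$ has $H^0 = H^{2n-1} = \mathbb{Z}$ and $H^n = \mathbb{Z}/2$, so its mod-$3$ cohomology is concentrated in degrees $0$ and $2n-1$; that is, $V_2(\mathbb{R}^{n+1})$ is a mod-$3$ cohomology $(2n-1)$-sphere. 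Form the mapping cone $C$ of $\phi|_R\colon V_2(\mathbb{R}^{n+1}) \to S^n$; its mod-$3$ cohomology has generators $u \in H^n(C;\mathbb{F}_3)$ and $v \in H^{2n}(C;\mathbb{F}_3)$, and vanishes in all intermediate degrees $n<i<2n$.

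Because $n \equiv 0 \pmod{4}$, the Steenrod operation $P^{n/4}$ at $p=3$ takes $H^n(C;\mathbb{F}_3)$ into $H^{n+4(n/4)}(C;\mathbb{F}_3) = H^{2n}(C;\mathbb{F}_3)$, so we may write $P^{n/4}\,u = c\,v$ for some $c \in \mathbb{F}_3$. By Milnor's description of the mod-$3$ Steenrod algebra, the indecomposable generators (apart from the Bockstein) are exactly the operations $P^{3^k}$, $k\ge 0$. If $n/4$ is not a power of $3$, then $P^{n/4}$ is decomposable: it can be expressed as a sum of admissible monomials $P^{a_i}P^{b_i}\cdots$ of length at least two. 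For each such term applied to $u$, the innermost factor $P^{b_i}$ sends $u$ into $H^{n+4b_i}(C;\mathbb{F}_3)$ with $0<b_i<n/4$, and this group vanishes because $n<n+4b_i<2n$. Consequently every term of the decomposition annihilates $u$, forcing $c = 0$.

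It remains to establish the opposite inequality $c \ne 0$ under the hypothesis that the section exists. This is the Hopf-invariant computation advertised in the abstract: using the equivariant cup-product structure on the mapping cone together with the nontrivial mod-$3$ Euler-class obstruction carried by the free $\mathbb{Z}/3$-action on $V_2(\mathbb{R}^{n+1})$, one shows that $P^{n/4}u$ is a nonzero multiple of $v$, contradicting the previous paragraph and ruling out the section. The main obstacle is precisely this last step: extracting a nonzero Hopf-type invariant from the mere existence of the section requires a delicate combination of the $\Sigma_3$-equivariance of $\phi$, the $\Sigma_3$-equivariant path data, and the explicit geometry of the equilateral-triangle configuration space, interpreted through the cohomology of the $\mathbb{Z}/3$-quotient of $V_2(\mathbb{R}^{n+1})$, entirely in the mapping-cone and cup-product spirit emphasized in the abstract.
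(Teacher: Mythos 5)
Your reduction to the space of equilateral triangles (a copy of $V_{n+1,2}$, a mod~$3$ cohomology $(2n-1)$-sphere) is fine and matches the paper's Sections~2--4, and your observation that decomposability of $\mathcal P^{n/4}$ plus the vanishing of $H^i(C;\mathbb F_3)$ for $n<i<2n$ forces $\mathcal P^{n/4}u=0$ is correct. But the step you defer --- showing that the existence of the section forces $\mathcal P^{n/4}u\ne 0$ on the mapping cone $C$ of $\phi|_R$ --- is not merely missing; it is false. The map $\phi|_R$ is \emph{non-equivariantly} homotopic to the unit tangent bundle projection $\pi_1:V_{n+1,2}\to S^n$, so $C$ is homotopy equivalent to the Thom space of $\tau_{S^n}$; since $\tau_{S^n}$ is stably trivial, every $\mathcal P^i$ with $i>0$ annihilates the Thom class, and $c=0$ always. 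A non-equivariant invariant of $C$ cannot detect the equivariance hypothesis, so your two half-arguments cannot be made compatible: the vanishing of intermediate cohomology holds only for the non-quotient cone, while the equivariance is visible only after passing to a quotient.

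The paper's actual route is different in exactly this respect. Equivariance is exploited by factoring $h$ through $h'':D_3(S^n)/\mathbb Z_3\to S^n$, whose target of $h''^*$ is the truncated polynomial algebra $H^*(D_3(S^n)/\mathbb Z_3;\mathbb F_3)$ with generators $u$ (degree $1$) and $w=\beta(u)$ (degree $2$), $w^n=0$. If $h''^*(\alpha)=c\,w^{n/2}$ with $c\ne 0$, then naturality and $\mathcal P^i(\alpha)=0$ force $\binom{n/2}{i}\equiv 0\pmod 3$ for $1\le i\le n/4-1$ (since $\mathcal P^i(w^{n/2})=\binom{n/2}{i}w^{n/2+2i}$ and $w^{n/2+2i}\ne 0$ in that range); by Lucas's theorem this happens only when $n/4$ is a power of $3$ --- this is where the arithmetic hypothesis enters, not through indecomposability of $\mathcal P^{n/4}$ in the Steenrod algebra. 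The conclusion $h''^*(\alpha)=0$ is then only the first half of the proof: it shows the image of $h'^*$ is $2$-primary, which is fed into the integral mapping-cone and cup-square argument of Theorem~\ref{mult-path-2mod4} (yielding $t^2=3c$ with $t$ a power of $2$, a contradiction). Your proposal omits this integral, $2$-primary half entirely, so even with a corrected mod~$3$ step it would not be complete.
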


% RK. Corollary for $3\mid m$ is removed.

%\begin{cor}
%\label{div3-path-0mod4}
%If $n \equiv 0 \pmod 4, \, n>4,$ $n/4$ is not a power of $3$, and $m$ is divisible by $3$, then $\varepsilon_{m, S^n}$ cannot have a $\Sigma_m$-equivariant section.
%\end{cor}

%RK I have rephrased the following sentence.

We state a more general result for all odd primes (including $3$) as well as their multiples:

\begin{thm}
\label{divp-path-0mod4}
If $m$ is divisible by an odd prime $p$ and $n=4n'p^s$ where $s\geq 0$, $p\nmid n'$, and $n'>\frac{p-1}{2}$, then $\varepsilon_{m, S^n}$ cannot have a $\Sigma_m$-equivariant section.
\end{thm}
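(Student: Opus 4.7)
The plan is to generalize the proof of Theorem~\ref{tri-path-0mod4} from $p=3$ to an arbitrary odd prime $p$ and any multiple $m$ of $p$. The strategy is to reduce to the $p$-primary case via a cyclic subgroup of $\Sigma_m$, construct a concrete test configuration from a linear rotation of order $p$ on $S^n$, and then exhibit a Hopf-invariant-type cup product whose non-vanishing modulo $p$ is controlled by the arithmetic of $n=4n'p^s$.

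\emph{Step 1 (Reduction to $C=\mathbb{Z}/p$).} Since $p\mid m$, fix a $p$-cycle $\sigma\in\Sigma_m$ acting on the first $p$ coordinates and let $C=\langle\sigma\rangle$. Choose $m-p$ pairwise distinct auxiliary points $y_{p+1},\dots,y_m\in S^n$ and restrict the base of $\varepsilon_{m,S^n}$ to the $C$-invariant subspace of $F_m(S^n)$ where the last $m-p$ coordinates equal $(y_{p+1},\dots,y_m)$. This restriction is canonically identified with $\varepsilon_{p,S^n}$ over $F_p\bigl(S^n\setminus\{y_{p+1},\dots,y_m\}\bigr)$, and any $\Sigma_m$-equivariant section restricts to a $C$-equivariant one. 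It therefore suffices to exhibit an obstruction to a $C$-equivariant section of $\varepsilon_{p,S^n}$ over a suitable $C$-invariant open subset of $F_p(S^n)$.

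\emph{Step 2 (Test configuration from a rotation).} Write $n=2k$ with $k=2n'p^s$, and decompose $\mathbb{R}^{n+1}=\mathbb{R}\oplus\mathbb{C}^k$, letting $C$ act trivially on $\mathbb{R}$ and by multiplication by a primitive $p$-th root of unity $\zeta$ on each complex coordinate. The corresponding order-$p$ rotation $\rho$ of $S^n$ fixes only the two poles $\pm e_0$ and acts freely on the equator $S^{n-1}=S(\mathbb{C}^k)$. The orbit map
\[
\phi\colon S^{n-1}\longrightarrow F_p(S^n),\qquad x\longmapsto(x,\rho x,\dots,\rho^{p-1}x),
\]
is $C$-equivariant, and the induced map on quotients $\bar\phi\colon L^{n-1}\to F_p(S^n)/C$ (with $L^{n-1}=S^{n-1}/C$ a lens space) classifies the free $C$-action on this orbit configuration. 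The assumed $C$-equivariant section of $\varepsilon_{p,S^n}$ pulls back through $\bar\phi$ and its natural stabilization toward $BC=L^\infty$, producing characteristic-class data in the mod-$p$ cohomology $H^*(BC;\mathbb{F}_p)=\mathbb{F}_p[c]\otimes\Lambda(e)$ with $|c|=2$, where $c=c_1(L_\zeta)$ is the Chern class of the tautological line bundle attached to $\zeta$.

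\emph{Step 3 (The Hopf-invariant-type obstruction; the main obstacle).} The fiber of $\varepsilon_{p,S^n}$ is homotopy equivalent to the homotopy fiber of the diagonal $S^n\to(S^n)^p$, namely $(\Omega S^n)^{p-1}$, which is $(n-2)$-connected with $\pi_{n-1}=\mathbb{Z}^{p-1}$ permuted cyclically by $C$. In the spirit of the Hopf-invariant definition recalled in the abstract, the primary obstruction to the $C$-equivariant section is realized as a cup product in the mod-$p$ cohomology of the Borel construction $F_p(S^n)_{hC}$, and after transport via $\bar\phi$ it takes the form $\lambda\cdot c^k\in H^n(BC;\mathbb{F}_p)$ for a scalar $\lambda$ arising as a symmetric function in the eigenvalues $\zeta,\zeta^2,\dots,\zeta^{p-1}$ of the permutation action on the loop factors. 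The hypotheses $n=4n'p^s$, $p\nmid n'$, and $n'>(p-1)/2$ are exactly the arithmetic conditions that force $\lambda$ to be a unit in $\mathbb{F}_p$: the power $p^s$ absorbs the $p$-divisibility coming from binomial coefficients in the permutation expression (via Kummer's theorem), the factor of $4n'$ (rather than $2n'$) reflects the complex dimension $k=2n'p^s$ of the rotation representation, and the strict inequality $n'>(p-1)/2$ rules out the borderline case in which an additional Steenrod-power relation would absorb the remaining factor. Carrying out this $p$-adic characteristic-class computation, which is the exact analogue at prime $p$ of the Hopf-invariant calculation underlying Theorem~\ref{tri-path-0mod4}, is the main technical obstacle of the proof.
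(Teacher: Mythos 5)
Your overall instinct---reduce to a cyclic group of order $p$, pass to a quotient whose mod $p$ cohomology contains powers of a degree-two class $w$, and detect the obstruction through the arithmetic of $n=4n'p^s$---matches the paper, but there are two genuine problems. First, your test space is too small. The orbit map $\phi\colon S^{n-1}\to F_p(S^n)$, $x\mapsto(x,\rho x,\dots,\rho^{p-1}x)$, has quotient a lens space $L^{n-1}$ of dimension $n-1$, so $H^{n}(L^{n-1};\mathbb F_p)=0$ and the degree-$n$ class you want to analyze (your $c^{k}$ with $2k=n$) vanishes there for dimension reasons; nothing can be ``transported via $\bar\phi$,'' and no ``stabilization toward $BC$'' is available, since a section over an $(n-1)$-dimensional skeleton does not extend to one over $L^\infty$. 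The paper instead works with the full space of regular $p$-gons $D_p(S^n)\cong V_{n+1,2}$, a $(2n-1)$-dimensional mod $p$ homology sphere with free $\mathbb Z_p$-action, whose quotient carries the truncated algebra $H^*(\mathbb Z_p;\mathbb F_p)$ with $w^n=0$---enough room both for $w^{n/2}$ in degree $n$ and for its Steenrod power in degree $n+2(p-1)p^s<2n$. (The reduction from $m$ to $p$ is also done differently: via the $D_{2p}$-equivariant homeomorphism $D_p(S^n)\to D_m(S^n)$ that subdivides each arc of a $p$-gon into $m/p$ equal pieces, not by freezing $m-p$ auxiliary points; your reduction is workable in principle but feeds into the wrong test space.)

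Second, your Step 3, which you yourself flag as ``the main technical obstacle,'' is the entire content of the theorem and is not supplied; moreover the mechanism you guess at (a scalar $\lambda$ given by symmetric functions in the eigenvalues of the permutation action on loop factors) is not what makes the hypotheses work. The actual argument is: the integral Hopf-invariant computation of Section~\ref{mult-path-2mod4-sec} goes through once one knows that the image of $H^n(S^n)$ in $H^n(D_p(S^n)/D_{2p})$ is $2$-primary, and for that it suffices to show $h''^*(\alpha)=0$ in $H^n(D_p(S^n)/\mathbb Z_p;\mathbb F_p)$. Since that group is generated by $w^{n/2}$, one compares cohomology operations: $\alpha$ is killed by every positive-degree operation, while $\mathcal P^{p^s}(w^{n/2})=\binom{n/2}{p^s}\,w^{n/2+(p-1)p^s}=2n'\,w^{n/2+(p-1)p^s}$, using $(1+t)^{2n'p^s}\equiv(1+t^{p^s})^{2n'}\pmod p$. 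This class is nonzero precisely because $p\nmid 2n'$ and because the exponent $n/2+(p-1)p^s$ is less than $n$, i.e.\ below the truncation---and that inequality is exactly the hypothesis $n'>\frac{p-1}{2}$. Your proposal never identifies this Steenrod-power computation, the role of the truncation degree $w^n=0$, or why $h''^*(\alpha)$ must be a multiple of $w^{n/2}$ in the first place, so the proof is not complete.
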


\begin{rem}
\label{two-odd-divisors-rem}
Assuming that the fibration $\varepsilon_{m, S^n}$ has an equivariant section for $m$ divisible by two odd primes $p<q$ and $n=4n'p^sq^t$ with $n'$ coprime with $p$ and $q$, we obtain the inequalities: $n'p^s\le \frac{q-1}{2}$ and $n'q^t\le \frac{p-1}{2}$. From the latter inequality it follows that $t=0$, $n=4n'p^s$, and the former inequality now becomes $n\le 2(q-1)$. Hence for any $m$ with two distinct odd prime divisors the fibration $\varepsilon_{m, S^n}$ does not have an equivariant section for all $n$ apart from a finite number of possible exceptions.
\end{rem}

The proofs of the above theorems rely on actually proving nonexistence of a $D_{2m}$-equivariant section over the space of regular $m$-gons; see Section~\ref{mgons-sec} for the definitions. But the proofs of the following theorems require the whole configuration space:

\begin{thm}
\label{mult-path-0mod4}
If $m\ge 5$ is a prime and $n \equiv 0 \pmod 4$, then $\varepsilon_{m, S^n}$ cannot have a $\Sigma_m$-equivariant section.
\end{thm}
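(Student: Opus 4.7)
\emph{Reduction and test family.} The plan is to reduce to a Sylow $p$-subgroup and then exploit a concrete family of configurations coming from a linear $C_p$-action on $S^n$. Since $m = p$ is prime, the cyclic group $C_p = \langle g \rangle \subset \Sigma_p$ generated by a $p$-cycle is a Sylow $p$-subgroup; a $\Sigma_p$-equivariant section of $\varepsilon_{p, S^n}$ restricts to a $C_p$-equivariant one, so it suffices to rule out the latter. Writing $n = 2k$ with $k$ even, decompose $\mathbb{R}^{n+1} = \mathbb{R} \oplus \mathbb{C}^k$ and let $C_p$ act trivially on $\mathbb{R}$ and by multiplication by $e^{2\pi i/p}$ on each $\mathbb{C}$-factor. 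The resulting linear action on $S^n$ has fixed set $\{\pm e_0\}$ and is free on the complement, which retracts $C_p$-equivariantly onto the equator $S^{n-1}\subset\mathbb{C}^k$. Define the test map $\phi: S^{n-1}\to F_p(S^n)$ by $\phi(x) = (x, gx, g^2x,\ldots,g^{p-1}x)$. This is $C_p$-equivariant with respect to the cyclic permutation action on the target, and lands in the configuration space because the action on $S^{n-1}$ is free.

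\emph{Extracting a Hopf-type class.} Suppose for contradiction that a $C_p$-equivariant section $s$ exists. Composing with $\phi$ yields a $C_p$-equivariant map $h = s \circ \phi : S^{n-1}\to P_p(S^n)$. Since $J_p$ is contractible, evaluation $\mathrm{ev}_0: P_p(S^n)\to S^n$ at the central point is a $\Sigma_p$-equivariant homotopy equivalence with \emph{trivial} action on the target, so $\mathrm{ev}_0\circ h$ descends to a map from the lens space $L = S^{n-1}/C_p$ to $S^n$. The multipath content of $h$ repackages into a $C_p$-equivariant map $\tilde h: S^{n-1}\times J_p \to S^n$ with $\tilde h(x, 1_i) = g^{i-1}x$, and taking the Borel quotient and then forming the mapping cone of its restriction to $S^{n-1}\times\partial J_p$ produces a cohomology class in the Borel cohomology whose square, in the spirit of the Hopf invariant, must vanish if the section exists. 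The strategy is to produce an explicit cup-product (or Steenrod-operation) identity in $H^*_{C_p}(S^n;\mathbb{F}_p)$ that is violated for $n \equiv 0 \pmod 4$ and $p \ge 5$.

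\emph{The cohomological computation and main obstacle.} The final step is the mod-$p$ cohomology computation in the Borel construction. The relevant class sits in $H^*(L^{n-1}_p;\mathbb{F}_p)$ and its image under a Steenrod power of the form $P^{n/4}$ encodes the Hopf-type obstruction. For $p\ge 5$ and every $n\equiv 0\pmod 4$, the mod-$p$ binomial coefficient governing $P^{n/4}$ on the natural degree-$2$ and degree-$4$ generators is nonzero, so the obstruction does not vanish and we obtain the desired contradiction. The reason $p\ge 5$ is needed rather than $p = 3$ is that for $p = 3$ the analogous coefficient can vanish, which is the source of the arithmetic exception $n/4 = 3^k$ in Theorem~\ref{tri-path-0mod4}. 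The main obstacle is to carry out this computation cleanly: identifying the correct generators of $H^*_{C_p}(S^n;\mathbb{F}_p)$ adapted to the representation used, pinning down the Hopf-invariant-like class produced by the multipath, and verifying nonvanishing of the Steenrod power in every remaining case. The overall structure of the proof parallels Theorems~\ref{tri-path-0mod4} and~\ref{divp-path-0mod4}, but works over the whole configuration space $F_p(S^n)$ rather than the space of regular $p$-gons, which is precisely what lets us avoid any arithmetic restriction on $n$ beyond $n \equiv 0 \pmod 4$.
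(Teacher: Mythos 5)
Your proposal has two fatal gaps. First, the test family is too small. The configurations $\phi(x)=(x,gx,\ldots,g^{p-1}x)$ are regular $p$-gons inscribed in complex lines, so $\phi$ parametrizes a subfamily of $D_p(S^n)$ by $S^{n-1}$, and after quotienting you are working on a lens space $L=S^{n-1}/C_p$ of dimension $n-1$. The class you need to obstruct lives in degree $n$, and a Hopf-invariant/cup-square argument needs a nonzero group in degree $2n$; both vanish on $L$ (and on the mapping cone you describe, whose cells stop in dimension $n$). Concretely, $\pi_1\circ\phi:S^{n-1}\to S^n$ is the equatorial inclusion, which is null-homotopic, so the hypothesis that a section exists imposes no condition at all on your family. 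The paper instead runs the mapping-cone argument on the full space of regular $p$-gons $D_p(S^n)\cong V_{n+1,2}$ (dimension $2n-1$), where $\pi_1$ is the unit tangent bundle of $S^n$ with Euler number $2$; that is the source of the contradiction $t^2=mc$.

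Second, the cohomological step you propose is exactly the one that fails here. The operation $\mathcal P^{n/4}$ raises degree by $(p-1)n/2\ge 2n$ for $p\ge 5$, landing above the top dimension of every relevant space; and more to the point, writing $n=4n'p^s$ with $p\nmid n'$, Lucas gives $\binom{n/2}{i}\equiv 0\pmod p$ for all $0<i<p^s$, so when $n'\le\frac{p-1}{2}$ (e.g.\ $n=4p^s$) every Steenrod power of $w^{n/2}$ that could survive the dimensional truncation vanishes. This is precisely why Theorem~\ref{divp-path-0mod4} carries the hypothesis $n'>\frac{p-1}{2}$ and why the paper's proof of Theorem~\ref{mult-path-0mod4} abandons Steenrod powers in favor of the squaring relation $(w^{n/2})^2=w^n$. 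That relation is useless on $D_p(S^n)/\mathbb Z_p$ (a mod-$p$ homology $(2n-1)$-sphere quotient, where $w^n=0$), which forces the passage to the whole configuration space: one needs $w^n\ne 0$ in $H^{2n}(F_p(\mathbb R^n)/\mathbb Z_p;\mathbb F_p)$ via the Cohen--Taylor computation, and the inequality $2n\le(p-1)(n-1)$ required there is where $p\ge 5$ actually enters --- not, as you claim, a nonvanishing binomial coefficient that distinguishes $p\ge5$ from $p=3$. One must then also handle the extra class $x\in H^n(F_p(S^n)/\mathbb Z_p;\mathbb F_p)$ coming from the pair $(F_p(S^n),F_p(\mathbb R^n))$, showing $x^2=0$, $wx=0$, and that $x$ restricts to zero on $D_p(S^n)/\mathbb Z_p$ because the dual cycle generates $H_{n-1}(D_p(S^n))=\mathbb Z_2$, hence dies mod $p$. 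None of these steps appears in your outline, and the step you do specify cannot be repaired within your framework.
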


\begin{thm}
\label{even-path}
If $m\ge 4$ is even and $n\ge 2$, then $\varepsilon_{m, S^n}$ cannot have a $\Sigma_m$-equivariant section.
\end{thm}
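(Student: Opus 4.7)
The plan is to reduce the statement to the classical nonexistence of a $\Sigma_2$-equivariant section of $\varepsilon_{2,S^n}$. Suppose for contradiction that $s\colon F_m(S^n)\to P_m(S^n)$ is a $\Sigma_m$-equivariant section. For each $\xi=(x_1,\dots,x_m)\in F_m(S^n)$, write $c(\xi)=s(\xi)(0)\in S^n$ for the $\Sigma_m$-invariant center of the multipath $s(\xi)$, and $\gamma_i^\xi\colon[0,1]\to S^n$ for the $i$-th arm, so that $\gamma_i^\xi(0)=c(\xi)$, $\gamma_i^\xi(1)=x_i$, and $\gamma_i^{\sigma\xi}=\gamma_{\sigma^{-1}(i)}^\xi$ for $\sigma\in\Sigma_m$. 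Then the reverse-and-concatenate path $\Gamma^\xi:=\overline{\gamma_1^\xi}*\gamma_2^\xi$ is a path from $x_1$ to $x_2$ satisfying $\Gamma^{(1\,2)\cdot\xi}=\overline{\Gamma^\xi}$ by equivariance.

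To promote this family to a $\Sigma_2$-equivariant section of $\varepsilon_{2,S^n}$, I would construct a continuous $\Sigma_2$-invariant extension $\tau\colon F_2(S^n)\to F_m(S^n)$ of the form $\tau(x_1,x_2)=(x_1,x_2,s_3(x_1,x_2),\dots,s_m(x_1,x_2))$ and set $\Gamma(x_1,x_2):=\Gamma^{\tau(x_1,x_2)}$. Exploiting the evenness of $m\geq 4$, I would pair the auxiliary points antipodally by setting $s_{2j}(x_1,x_2)=-s_{2j-1}(x_1,x_2)$ for $j\geq 2$; this reduces the problem to choosing $m/2-1$ continuous $\Sigma_2$-invariant maps $s_{2j-1}\colon F_2(S^n)/\Sigma_2\to S^n$, each avoiding $\{\pm x_1,\pm x_2\}$ and the antipodes of the previously chosen auxiliary points. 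Given such an extension, the resulting $\Sigma_2$-equivariant section of $\varepsilon_{2,S^n}$ restricted to the antipodal slice $\{(x,-x):x\in S^n\}$ yields a continuous homotopy $H\colon S^n\times[0,1]\to S^n$ from $\id$ to the antipode with $H(-x,t)=H(x,1-t)$. This $H$ factors through the quotient $Q=(S^n\times[0,1])/\bigl((x,t)\sim(-x,1-t)\bigr)\simeq\mathbb{RP}^n$, and since its restriction to $S^n\times\{0\}$ is the identity, we would obtain a continuous $f\colon\mathbb{RP}^n\to S^n$ with $f\circ\pi\simeq\id_{S^n}$, where $\pi\colon S^n\to\mathbb{RP}^n$ is the double cover. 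This is impossible, because $\pi_*\colon H_n(S^n;\mathbb{Z})\to H_n(\mathbb{RP}^n;\mathbb{Z})$ is either zero (for $n$ even) or multiplication by $2$ (for $n$ odd), neither of which admits a left inverse sending the generator to itself.

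The main obstacle in this plan is the construction of the $\Sigma_2$-invariant extension $\tau$. A naive dimension count is encouraging---for $n\geq 2$, the forbidden sets have codimension $\geq n\geq 2$ in the $2n$-dimensional orbit space $F_2(S^n)/\Sigma_2\simeq\mathbb{RP}^n$---but in low dimensions the extension is genuinely obstructed. For instance, when $n=2$ and $m=4$, the bundle over $\mathbb{RP}^2$ whose fiber over $\{x_1,x_2\}$ is $S^2\setminus\{\pm x_1,\pm x_2\}$ retracts to the sphere bundle of the orthogonal complement of the tautological line bundle, whose top Stiefel--Whitney class $w_2=\alpha^2$ is nonzero, precluding any global $\Sigma_2$-invariant section. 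To bypass this, I would work only over an open dense subspace of $F_2(S^n)$---obtained by fixing generic auxiliary points $p_3,\dots,p_m\in S^n$ and excluding pairs that meet $\{\pm p_3,\dots,\pm p_m\}$---and verify that the double cover $\pi$ restricted to the corresponding punctured antipodal slice still admits no homotopy left inverse; for $n=2$, this reduces to the (provable) fact that the index-$2$ inclusion of free groups $\pi_1(S^2\setminus F)\hookrightarrow\pi_1(\mathbb{RP}^2\setminus\pi(F))$ admits no group-theoretic retraction.
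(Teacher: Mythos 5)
Your strategy --- augment each pair $(x_1,x_2)$ by $\Sigma_2$-invariantly chosen auxiliary points to get a map $F_2(S^n)\to F_m(S^n)$, pull back the hypothetical section, and contradict the known $m=2$ case --- is genuinely different from the paper's argument (which restricts to the subspace $Q_4(S^n)=S(\tau)\times_{S^n}S(\tau)$ of antipodally balanced quadruples and computes with $H^*(D_8;\mathbb F_2)$ via Nakaoka's lemma). But it has a fatal gap, and the obstruction you dismiss as a low-dimensional accident is in fact present for every $n$. Already the first auxiliary point cannot be chosen: a $\Sigma_2$-invariant continuous $s_3$ with $s_3(x_1,x_2)\notin\{x_1,x_2\}$ is a section of a fibration over $F_2(S^n)/\Sigma_2\simeq\mathbb{RP}^n$ which, over the antipodal slice, fiberwise deformation retracts to the sphere bundle of $\gamma^{\perp}$ (the orthogonal complement of the tautological line bundle in $\mathbb R^{n+1}$); since $w_n(\gamma^{\perp})=\alpha^n\neq 0$ in $H^n(\mathbb{RP}^n;\mathbb F_2)$, no such section exists for any $n\geq 1$, not only for $n=2$. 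So you are forced into your fallback in all cases.

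The fallback, however, cannot work. Deleting the pairs meeting $\{\pm p_3,\dots,\pm p_m\}$ removes exactly the part of $F_2(S^n)$ where the $m=2$ obstruction lives: $\varepsilon_{2,S^n}$ \emph{does} admit a $\Sigma_2$-equivariant section over the complement of the antipodal slice (unique shortest geodesics), and its equivariant Schwarz genus is finite, so no contradiction can be extracted from an equivariant section over an open dense subset. Concretely, in your own formulation the map $H$ takes values in all of $S^n$, so what you would need to rule out is a map $f\colon\mathbb{RP}^n\setminus\pi(F)\to S^n$ with $f\circ\pi_F\simeq j$, where $j\colon S^n\setminus F\hookrightarrow S^n$ is the inclusion; but $j$ is null-homotopic (it factors through the contractible $S^n\setminus\{pt\}$), so a constant $f$ does the job and there is nothing to contradict. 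Your proposed $\pi_1$-retraction statement addresses a left homotopy inverse landing in $S^2\setminus F$, which is not what the construction produces. To repair the argument you would have to keep the entire antipodal slice inside the domain of $\tau$, and that is exactly what the characteristic-class obstruction above forbids; this is why the paper instead works with the four-point subspace $Q_4(S^n)$ and a cohomological obstruction over $BD_8$ rather than reducing to $m=2$.
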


%RK We cannot extend this theorem to multiples $m'$ of $m$ because for $m$ we need the full configuration space $F_m(S^n)$, not only $D_m(S^n)$!

\begin{rem}
In~\cite[Proposition~5.5]{bgrt2010} it is proved that $\varepsilon_{m, S^n}$ has no $\Sigma_m$-equivariant section for odd $n$ and any $m\ge 2$.  In the case of odd $n$ there is a simple cohomological obstruction based on the degree of a map of a sphere to itself, while in the above theorems the obstruction uses an analogue of the Hopf invariant. In addition, by using the mod 2 degree, it is proved in \cite{bgrt2010} prior to the proof of Proposition~5.5 that $\varepsilon_{2, S^n}$ has no $\Sigma_2$-equivariant section for all $n \geq 1.$
\end{rem}

\begin{rem}
The first open case of Question~\ref{mult-path} is therefore $m=3$, $n=4$; see the discussion of this case in the final section.  Nonetheless, the theorems proved here (and Remark~\ref{two-odd-divisors-rem}), together with the lack of any approach that seems likely to produce equivariant sections, provide evidence for the conjecture that there is no case with $m\geq 2$ in which $\varepsilon_{m ,S^n}$ has a $\Sigma_m$-equivariant section.
\end{rem} 

\begin{rem}
In~\cite[Corollary~5.4]{bgrt2010} it is shown that the Schwarz genus of $\varepsilon_{3, S^n}$ is at most $1$. Hence the above theorems imply that for $n$ not divisible by $4$ and $n$ not of the form $4\cdot 3^s$ with $s \geq 0$ the Schwarz genus of $\varepsilon_{3, S^n}$ is equal to $1$.
\end{rem}

The above theorems are negative results, but they lead to a positive consequence:

%RK Does the first sentence sound OK? 
%PSL IF you are referring to the first sentence of the corollary, note the slight changes I made.  

\begin{cor}
\label{coincidence}
Assume $m\ge 2$ and $n\ge 2$ are as in any of the above theorems, or $n$ is odd. Then for any $\Sigma_m$-invariant map $f : F_m(S^n)\to S^n$, i.e. a map constant on orbits, there exists a configuration $(x_1,\ldots, x_m)\in F_m(S^n)$ such that $f(x_1,\ldots, x_m)$ coincides with one of the points $x_1,\ldots, x_m$.
\end{cor}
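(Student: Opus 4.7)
The plan is to argue by contradiction: suppose there exists a $\Sigma_m$-invariant map $f : F_m(S^n) \to S^n$ with $f(x_1, \ldots, x_m) \neq x_i$ for every configuration $(x_1, \ldots, x_m) \in F_m(S^n)$ and every $i$. I will use $f$ to exhibit a $\Sigma_m$-equivariant section of $\varepsilon_{m, S^n}$, contradicting the appropriate one of Theorems~\ref{mult-path-2mod4}--\ref{even-path} or, in the odd-$n$ case, the result of~\cite{bgrt2010} recalled in the remark following Theorem~\ref{even-path}.

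The candidate section sends $(x_1, \ldots, x_m)$ to the multipath $J_m \to S^n$ whose hub (the image of $0 \in J_m$) is placed at $f(x_1, \ldots, x_m)$ and whose $i$-th arm is a canonical path from $f(x_1, \ldots, x_m)$ to $x_i$. The invariance of $f$ makes the hub automatically $\Sigma_m$-invariant; and since the $i$-th arm depends only on the ordered pair $(f(x_1, \ldots, x_m), x_i)$, the action of $\sigma \in \Sigma_m$ carries the $i$-th arm to the $\sigma(i)$-th arm, which is exactly the equivariance required for a section of $\varepsilon_{m, S^n}$. On the open subset of $F_m(S^n)$ where no $x_i$ is antipodal to $f$, the unique minimizing geodesic from $f$ to $x_i$ is the natural and continuous choice of arm.

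The hard part will be extending this construction continuously across the antipodal stratum $\{(x_1, \ldots, x_m) : x_i = -f(x_1, \ldots, x_m) \text{ for some } i\}$, where the minimizing geodesic is not unique. For $n$ odd a fixed nowhere-vanishing tangent vector field on $S^n$ supplies a coherent direction at $f$ and removes the ambiguity. For $n$ even no such global vector field exists, so I would use the configuration itself as a substitute: either extract a tangent direction at $f$ from the remaining points $x_j$, $j \neq i$ (for instance, by projecting their vector sum onto $T_{f} S^n$), or factor the $i$-th arm as $f \to c(x_1, \ldots, x_m) \to x_i$ through an invariantly chosen auxiliary point $c(x_1, \ldots, x_m)$ that avoids the antipodes of $f$ and of each $x_j$, so that both legs are again minimizing geodesics. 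Any such refinement assembles into a continuous $\Sigma_m$-equivariant section of $\varepsilon_{m, S^n}$, yielding the required contradiction.
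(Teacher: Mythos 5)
Your overall strategy is the right one, but the construction as written has a genuine gap, and the gap is exactly where you admit it is: the antipodal stratum. Placing the hub of the multipath at $f(x_1,\ldots,x_m)$ forces you to deal with configurations where some $x_i = -f(x_1,\ldots,x_m)$, and none of your proposed repairs is carried out. The vector-field idea only helps for odd $n$ (which is the case already settled in \cite{bgrt2010} and not the interesting one here); the ``project the vector sum of the remaining points'' idea can fail because that projection may vanish; and the auxiliary point $c(x_1,\ldots,x_m)$ is never constructed, and constructing it invariantly and continuously while avoiding all the relevant antipodes is not obviously easier than the original problem.

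The paper's proof dissolves the difficulty with one move you missed: put the hub at the \emph{antipode} $h(x_1,\ldots,x_m) = -f(x_1,\ldots,x_m)$ rather than at $f(x_1,\ldots,x_m)$ itself. The hypothesis you are contradicting says precisely that $f(x_1,\ldots,x_m) \neq x_i$ for every $i$, which translates verbatim into the statement that $-f(x_1,\ldots,x_m)$ is never antipodal to any $x_i$. Hence the unique minimizing geodesic from the hub to each $x_i$ exists for \emph{every} configuration, depends continuously on the configuration, and the resulting multipath is $\Sigma_m$-equivariant exactly as you argued. There is no stratum to extend over, and the proof is three lines. The lesson is that the hypothesis $f \neq x_i$ is tailored to rule out antipodality of $-f$ to $x_i$, not of $f$ to $x_i$; once you notice this, your own framework closes without any of the machinery you were reaching for.
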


\begin{rem}
From the proofs of Theorems~\ref{mult-path-2mod4}--\ref{divp-path-0mod4} and \cite[Proposition~5.5]{bgrt2010} it follows that in the corresponding cases of this corollary the configuration may be chosen to be a regular $m$-gon centered at the origin. Moreover, in these cases it is sufficient to assume that $f$ is defined only for such regular $m$-gons and is invariant with respect to the action of the dihedral group $D_{2m}$. 
%RK I have made the next sentence more precise.
If we rely on Theorem~\ref{mult-path-0mod4} then the configuration still may be chosen to be a regular $m$-gon, but the map $f$ has to be defined and $D_{2m}$-invariant on the whole configuration space. In Theorem~\ref{even-path} the domain is necessarily larger; see the details in Section~\ref{even-sec}.
\end{rem}

\begin{proof}
Assume the contrary and put 
$$
h(x_1,\ldots, x_m) = -f(x_1, \ldots, x_m).
$$ 
The point $h(x_1,\ldots, x_m)$ is \emph{not} antipodal to any of the points $x_1,\ldots, x_m$. So $h(x_1,\ldots, x_m)$ can be connected with every point $x_1,\ldots, x_m$ by a unique shortest path. Therefore we obtain a continuous $\Sigma_m$-equivariant section of $\varepsilon_{m, S^n}$, which cannot exist by the corresponding theorem of this paper or \cite[Proposition~5.5]{bgrt2010}.
%RK I have removed the list of theorems from the previous sentence.
\end{proof}

%RK I tried to outline the contents in the following paragraph.
%PSL This is fine.

The paper is organized as follows. In Section~\ref{reduction-sec} we reduce Question~\ref{mult-path} to a question about existence of certain $\Sigma_m$-equivariant maps from configuration spaces to a sphere. In Sections~\ref{mgons-sec} and \ref{tri-sec} we describe the spaces of regular $m$-gons. In Section~\ref{dihedral-coh-sec} we list the needed facts about the cohomology of dihedral groups. Then in Sections~\ref{mult-path-2mod4-sec}--\ref{even-sec} we prove the main results. In Section~\ref{m3-n4-sec} we discuss the first open case $m=3$, $n=4$ and explain why our approach fails in this case.

\bigskip
\textbf{Acknowledgments.}
We owe a major debt of gratitude to Fred Cohen for numerous discussions about these problems, and for the insights he has generously shared with the authors.  We are also grateful to Jesus Gonz\'alez for his interest and many helpful comments.

\section{Reduction of Question~\ref{mult-path}}
\label{reduction-sec}
Fix the numbers $m$ and $n$ and denote $\varepsilon_{m, S^n}$ simply by $\varepsilon$. A $\Sigma_m$-equivariant section of $\varepsilon : e^{-1}(F_m(S^n)) \to F_m(S^n)$ assigns to an $m$-tuple $(x_1,\ldots, x_m)\in F_m(S^n)$ a map 
$$
g_{(x_1,\ldots, x_m)} : J_m \to S^n
$$
so that the restriction $g_{(x_1,\ldots, x_m)}|_{\partial J_m}$ gives these $m$ points $x_1,\ldots, x_m$ in the prescribed order, and if we permute $(x_1,\ldots, x_m)$ the map $g_{(x_1,\ldots, x_m)}$ is right-multiplied by the corresponding permutation of branches of $J_m$.

Obviously, this gives an adjoint map $g : F_m(S^n)\times J_m \to S^n$. If we consider $J_m$ as $m$ segments $I_1,\ldots, I_m$ glued together, we see that $g$ gives $m$ maps
$$
g_i : F_m(S^n) \times I_i \to S^n.
$$
Define the map $\tilde g : F_m(S^n) \times I \to (S^n)^{\times m}$ (here $I=[0, 1]$) by 
$$
\tilde g((x_1,\ldots, x_m), t) = \left(g_1((x_1,\ldots, x_m), t), \ldots, g_m((x_1,\ldots, x_m), t)\right).
$$ 
The conditions on the section $g$ are equivalent to the following: $\tilde g(\cdot,t)$ is $\Sigma_m$-equivariant for all $t$,  
$\tilde g(\cdot, 1)$ coincides with the standard inclusion 
$$
\iota : F_m(S^n)\subseteq (S^n)^{\times m},
$$
and $\tilde g(\cdot, 0)$ assigns to any $m$-tuple $(x_1,\ldots, x_m)$ an $m$-tuple
$$
(g_1((x_1,\ldots, x_m), 0), \ldots, g_m((x_1,\ldots, x_m), 0))
$$ 
of \emph{equal} points of $S^n$ (because $I_1, \ldots, I_m$ are glued at $t=0$ to form $J_m$).

Putting $\tilde g(\cdot, 0) = (h(\cdot), \ldots, h(\cdot))$ we have proved:

\begin{lem}
\label{reduction1}
Question~\ref{mult-path} is equivalent to the following: Is there a $\Sigma_m$-equivariant homotopy between the natural inclusion $\iota :F_m(S^n)\to (S^n)^{\times m}$ and the map $\Delta\circ h$, where 
$$
\Delta : S^n \to (S^n)^{\times m}
$$
is the inclusion of the thin diagonal, and $h : F_m(S^n)\to S^n$ is some $\Sigma_m$-equivariant map? 
\end{lem}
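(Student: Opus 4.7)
The plan is to verify both implications of the claimed equivalence; the forward direction is essentially already contained in the discussion above. First, given a $\Sigma_m$-equivariant section of $\varepsilon$, I would take the map $\tilde g : F_m(S^n) \times I \to (S^n)^{\times m}$ constructed in the paragraphs preceding the lemma and observe that at $t=1$ it equals $\iota$, while at $t=0$ it has the form $\Delta \circ h$ for a well-defined $h : F_m(S^n) \to S^n$: because the segments $I_1,\ldots,I_m$ are all glued at $t=0$ to form $J_m$, the values $g_i((x_1,\ldots,x_m),0)$ agree for every $i$, and this common value defines $h$. The $\Sigma_m$-equivariance of each time slice $\tilde g(\cdot,t)$ follows from the equivariance of the section together with the fact that $\Sigma_m$ acts on $J_m$ by permuting the branches $I_1,\ldots,I_m$ while fixing the central vertex; in particular $h$ is $\Sigma_m$-invariant, which is exactly what is required in order for $\Delta \circ h$ to be $\Sigma_m$-equivariant.

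For the converse, suppose $H : F_m(S^n) \times I \to (S^n)^{\times m}$ is a homotopy whose restriction to each time slice is $\Sigma_m$-equivariant, with $H(\cdot,0) = \Delta \circ h$ and $H(\cdot,1) = \iota$. Write $H = (H_1,\ldots,H_m)$ and set $g_i(x,t) := H_i(x,t)$. Because $H(x,0) = (h(x),\ldots,h(x))$, the values $g_i(x,0)$ coincide for all $i$; this is precisely the compatibility condition needed to glue the $g_i$ into a single continuous map $g : F_m(S^n) \times J_m \to S^n$, identifying the $t=0$ endpoints of the branches. The condition $H(x,1) = (x_1,\ldots,x_m)$ then forces $g(x,1_i) = x_i$, so under the exponential-law correspondence $g$ corresponds to a section of $\varepsilon$, and equivariance of each $H(\cdot,t)$ translates into $\Sigma_m$-equivariance of the resulting section under the action on $P_m(S^n)$ by right multiplication on $J_m$.

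The only delicate point is to match the two $\Sigma_m$-actions: the permutation of the branches of $J_m$ on one side, and the coordinate permutation on $(S^n)^{\times m}$ on the other. Once a convention is fixed for how $\sigma \in \Sigma_m$ reorders $I_1,\ldots,I_m$, both directions become a direct unwinding of definitions. I do not foresee any genuine obstacle; the lemma is a bookkeeping translation between a section of the path-space fibration and a homotopy in $(S^n)^{\times m}$, made possible by the adjunction between $\mathrm{Map}(F_m(S^n), P_m(S^n))$ and $\mathrm{Map}(F_m(S^n) \times J_m, S^n)$ provided by the compact-open topology.
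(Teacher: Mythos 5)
Your proposal is correct and follows essentially the same route as the paper: the forward direction is the adjoint/exponential-law translation of a section into the homotopy $\tilde g$ with $\tilde g(\cdot,1)=\iota$ and $\tilde g(\cdot,0)=\Delta\circ h$, and the converse is obtained by gluing the coordinate maps $H_i$ along their common value at $t=0$ to recover a map out of $F_m(S^n)\times J_m$. Your observation that $h$ must be $\Sigma_m$-invariant (equivalently, equivariant for the trivial action on $S^n$) matches the paper's convention exactly.
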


\begin{rem}
Here and below we assume that the sphere $S^n$ has trivial $\Sigma_m$-action.
\end{rem}

In the following lemma the action of $\Sigma_{m-1}$ on $F_m(S^n)$ is the action by permutations of $m$-tuples $(x_1,x_2, \ldots,x_m)$ which fix $x_1.$

\begin{lem}
\label{reduction2}
The fibration $\varepsilon_{m, S^n}$ has a $\Sigma_m$-equivariant section over $F_m(S^n)$ if and only if there exists a $\Sigma_m$-equivariant map $h: F_m(S^n)\to S^n$, $\Sigma_{m-1}$-equivariantly homotopic to the composition $\pi_1\circ \iota$, where $\pi_1 : (S^n)^{\times m}\to S^n$ is the natural projection to the first factor.
\end{lem}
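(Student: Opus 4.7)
The plan is to recast Lemma~\ref{reduction1} via the standard adjunction matching $\Sigma_m$-equivariant maps into a product with equivariant maps into a single factor of lower symmetry. First I would establish a bijection between $\Sigma_m$-equivariant maps $\phi : F_m(S^n) \to (S^n)^{\times m}$ and $\Sigma_{m-1}$-equivariant maps $\phi_1 : F_m(S^n) \to S^n$, with $\Sigma_{m-1} \subset \Sigma_m$ the subgroup fixing the first coordinate and $S^n$ carrying the trivial action (as declared after Lemma~\ref{reduction1}). The forward map sends $\phi$ to its first component $\pi_1 \circ \phi$, which is automatically $\Sigma_{m-1}$-equivariant since $\Sigma_{m-1}$ preserves the first factor of $(S^n)^{\times m}$. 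The inverse reconstructs $\phi$ from $\phi_1$ by declaring the $i$-th coordinate of $\phi(x)$ to be $\phi_1(\tau_i \cdot x)$, where $\tau_i = (1,i) \in \Sigma_m$ (noting $\tau_i \cdot x$ has first entry $x_i$). The only nontrivial step is checking that this reconstruction really is $\Sigma_m$-equivariant, and this reduces to verifying that $\tau_i \sigma \tau_{\sigma^{-1}(i)}^{-1} \in \Sigma_{m-1}$ for every $\sigma \in \Sigma_m$ and every $i$; this follows from the one-line computation $(\tau_i \sigma \tau_{\sigma^{-1}(i)})(1) = \tau_i(\sigma(\sigma^{-1}(i))) = \tau_i(i) = 1$, using that $\tau_j^{-1} = \tau_j$.

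Next, I would apply the same construction with $F_m(S^n)$ replaced by $F_m(S^n) \times I$ (with $\Sigma_m$ acting trivially on $I$) to obtain a bijection between $\Sigma_m$-equivariant homotopies of maps into $(S^n)^{\times m}$ and $\Sigma_{m-1}$-equivariant homotopies of maps into $S^n$. Under this correspondence the inclusion $\iota$ is matched with $\pi_1 \circ \iota$ (both the identification $\iota(x)_i = x_i$ and the formula $\phi_1(\tau_i \cdot x) = x_i$ yield the same thing), and a map of the form $\Delta \circ h$ is matched with its common component $h$. Because all coordinates of $\Delta \circ h(x)$ coincide, the $\Sigma_m$-equivariance of $\Delta \circ h$ is equivalent to the full $\Sigma_m$-equivariance of $h$, which is strictly stronger than the $\Sigma_{m-1}$-equivariance automatically produced by the bijection.

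Combining these pieces with Lemma~\ref{reduction1} gives the claimed equivalence: a $\Sigma_m$-equivariant section of $\varepsilon_{m, S^n}$ over $F_m(S^n)$ exists iff $\iota$ and $\Delta \circ h$ are $\Sigma_m$-equivariantly homotopic for some $\Sigma_m$-equivariant $h$, iff $\pi_1 \circ \iota$ and $h$ are $\Sigma_{m-1}$-equivariantly homotopic for some $\Sigma_m$-equivariant $h$, which is precisely Lemma~\ref{reduction2}. The main (though mild) obstacle will be keeping the $\Sigma_m$-action conventions consistent during the reconstruction step; once the coset identity above is in hand, the rest is bookkeeping.
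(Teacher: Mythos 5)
Your proposal is correct and follows essentially the same route as the paper: the forward direction is composition with $\pi_1$ together with $\pi_1\circ\Delta=\id_{S^n}$, and the backward direction reconstructs the remaining coordinates by precomposing with coset representatives of $\Sigma_{m-1}$ in $\Sigma_m$ (you use the transpositions $\tau_i=(1,i)$ where the paper uses cycles, which is immaterial since $\Sigma_{m-1}$-invariance of $g_1$ makes the construction independent of the choice of representatives). Packaging the argument as a bijection of equivariant mapping sets, applied also to $F_m(S^n)\times I$, is a mild reorganization rather than a different proof.
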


\begin{proof}
Assume that a $\Sigma_m$-equivariant section of $\varepsilon_{m, S^n}$ exists. By Lemma~\ref{reduction1} we have a $\Sigma_m$-equivariant map $h : F_m(S^n)\to S^n$ such that $\Delta\circ h$ is equivariantly homotopic to $\iota : F_m(S^n)\to (S^n)^{\times m}$. Then left-multiplying by the $\Sigma_{m-1}$-equivariant map $\pi_1$ we obtain
$$
\pi_1\circ \Delta\circ h \simeq_{\Sigma_{m-1}} \pi_1\circ \iota.
$$
It remains to note that $\pi_1\circ \Delta = \id_{S^n}$.

Now assume that there exists a $\Sigma_{m-1}$-equivariant map 
$$
g_1 : F_m(S^n)\times I \to S^n
$$
such that $g_1(\cdot, 0)$ is $\Sigma_m$-equivariant and $g_1((x_1,\ldots, x_m), 1) = x_1$. Put $g_k((x_1,\ldots, x_m), t) = g_1((x_k, x_1, \ldots, \cancel{x_k}, \ldots, x_m), t)$ for $k>1$ and 
$$
\tilde g((x_1, \ldots, x_m), t) = \left(g_1((x_1,\ldots, x_m), t), \ldots, g_m((x_1,\ldots, x_m), t)\right).
$$
The map $\tilde g$ is $\Sigma_m$-equivariant: 
\begin{multline*}
\tilde g((x_{\sigma(1)}, \ldots, x_{\sigma(m)}), t) = \left(g_1((x_{\sigma(1)}, \ldots, x_{\sigma(m)}), t), \ldots, g_m((x_{\sigma(1)}, \ldots, x_{\sigma(m)}), t)\right) =\\
= \left(g_1((x_{\sigma(1)}, \ldots, x_{\sigma(m)}), t), \ldots, g_1((x_{\sigma(m)}, x_{\sigma(1)}, \ldots, x_{\sigma(m-1)}), t)\right) =\\
= \left(g_{\sigma(1)}((x_1, \ldots, x_m), t), \ldots, g_{\sigma(m)}((x_1, \ldots, x_m), t)\right),
\end{multline*}
and obviously the map $\tilde g$ is a homotopy between $\iota$ (at $t=1$) and $\Delta\circ h$, where $h = g_1(\cdot, 0)$. So we apply Lemma~\ref{reduction1}.
\end{proof}

\section{Regular polygons in $F_m(S^n)$}
\label{mgons-sec}
We assume $m\ge 3$. Let us describe a subspace of $F_m(S^n)$.

\begin{defn}
We denote by $D_m(S^n)\subset F_m(S^n)$ the space of regular planar $m$-gons $x_1,\ldots, x_m$ centered at the origin. The vertices $x_1,\ldots, x_m$ lie on the unit sphere $S^n$ and are considered to be labeled in one of the two cyclic orders.
\end{defn}

%PSL One might wonder if this subset is a $D_{2m}$-equivariant deformation retract of $F_m(S^n)$ when $m>3$.  What do you know about this?
%RK In our paper with Volovikov we prove that for $m$ prime the (zero-based) Schwarz genus of $F_m(S^n)\to F_m(S^n)/\mathbb Z_m$ equals $(m-1)(n-1)+1$, while the Schwarz genus of $D_m(S^n)\to D_m(S^n)/\mathbb Z_m$ equals $2n-1$. So these spaces are essentially different for $m>3$.

This subset is not $\Sigma_m$-invariant unless $m=3$, but it is always invariant with respect to the action of the dihedral group $D_{2m}$ of order $2m,$ which we view as a subgroup of $\Sigma_m.$  In order to prove Theorems~\ref{mult-path-2mod4}, \ref{tri-path-0mod4} and \ref{divp-path-0mod4} we are going to use Lemma~\ref{reduction2} and replace the full space $F_m(S^n)$ by its subspace $D_m(S^n)$ and $\Sigma_m$-equivariance by $D_{2m}$-equivariance. We denote by $\mathbb Z_m$ the integers mod $m$, frequently viewed as the subgroup of $D_{2m}$ or the symmetric group $\Sigma_m$ consisting of the cyclic permutations.

The space $D_m(S^n)$ is easily seen to be homeomorphic to the Stiefel manifold $V_{n+1,2}.$ Namely, to $(x,y) \in V_{n+1,2}$ we assign $(x_1, \ldots, x_m) \in D_m(S^n),$ where $x_1 = x$ together with the further points $x_2,\ldots, x_m$ form the regular $m$-gon in the plane spanned by $x$ and $y$ so that the order of the vertices proceeds in the same sense as the rotation from $x$ to $y$. In this way, \emph{we shall identify these two spaces.}  We need the following information about the map $\pi_1 : D_m(S^n)\to S^n$ ($(x_1,\ldots, x_m)\mapsto x_1$), which is the restriction of the map $\pi_1$ from Lemma~\ref{reduction2} to the space of regular $m$-gons:

\begin{lem}
\label{st-bundle}
Let $n$ be even. The map $\pi_1 : D_m(S^n)\to S^n$ is the projection of the sphere bundle of the tangent bundle of $S^n$. Therefore $H^i(D_m(S^n)) = \mathbb Z$ in dimensions $i=0$ and $i=2n-1$, $H^n(D_m(S^n)) = \mathbb Z_2$, all other cohomology groups are zero, and the map $\pi_1^* : H^n(S^n)\to H^n(D_m(S^n))$ is a surjection.
\end{lem}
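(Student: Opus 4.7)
The plan is to recognize $\pi_1$ as the unit tangent sphere bundle of $S^n$ and then run the Gysin sequence with the known Euler class. Under the identification $D_m(S^n) \cong V_{n+1,2}$ described just before the lemma, a point $(x,y) \in V_{n+1,2}$ satisfies $x,y \in S^n$ and $x \cdot y = 0$, and the map $\pi_1 : D_m(S^n) \to S^n$ corresponds to the forgetful map $(x,y) \mapsto x$. Thus $\pi_1$ is precisely the projection of the unit tangent bundle $UTS^n \to S^n$, which is the sphere bundle of $TS^n$ (here I would just observe that the auxiliary vector $y$ is, by construction, a unit vector orthogonal to $x$, so it represents a unit tangent vector at $x$).

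Next, I would apply the Gysin sequence of this oriented rank-$n$ sphere bundle,
$$
\cdots \to H^{i-n}(S^n) \xrightarrow{\cup e} H^{i}(S^n) \xrightarrow{\pi_1^{*}} H^{i}(V_{n+1,2}) \to H^{i-n+1}(S^n) \xrightarrow{\cup e} \cdots,
$$
where $e \in H^n(S^n;\mathbb{Z}) \cong \mathbb{Z}$ is the Euler class of $TS^n$. Since $n$ is even, $\langle e, [S^n]\rangle = \chi(S^n) = 2$, so $\cup e : H^0(S^n) \to H^n(S^n)$ is multiplication by $2$.

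Reading off the sequence in each degree then gives the claim. In degree $0$, connectedness (or the sequence itself) yields $H^0 = \mathbb{Z}$. In degree $n$ the relevant portion is $\mathbb{Z} \xrightarrow{\cdot 2} \mathbb{Z} \xrightarrow{\pi_1^{*}} H^n(V_{n+1,2}) \to H^1(S^n) = 0$, which simultaneously identifies $H^n(V_{n+1,2}) \cong \mathbb{Z}/2$ and shows $\pi_1^{*}$ is surjective, which is the crucial statement needed later. In degree $2n-1$ the sequence reduces to $0 \to H^{2n-1}(V_{n+1,2}) \to H^n(S^n) = \mathbb{Z} \xrightarrow{\cdot 2} H^{2n}(S^n) = 0$, giving $H^{2n-1}(V_{n+1,2}) \cong \mathbb{Z}$. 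In every other degree both neighboring groups in the sequence vanish, so the corresponding cohomology of $V_{n+1,2}$ is zero.

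The proof is essentially a routine Gysin-sequence computation, so there is no serious obstacle; the only point requiring attention is recording that the Euler class pairs with the fundamental class of $S^n$ as $\chi(S^n) = 2$ (which forces the $\mathbb{Z}/2$ in the middle and, just as importantly for the applications, the surjectivity of $\pi_1^{*}$ in degree $n$). If one preferred, the same result could be obtained from the Serre spectral sequence of $S^{n-1} \to V_{n+1,2} \to S^n$, where the transgression of the fiber class equals $e = 2$, but the Gysin route is the most direct.
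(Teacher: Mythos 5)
Your proof is correct and follows essentially the same route as the paper: the paper identifies $\pi_1$ with the unit tangent sphere bundle and computes via the Serre spectral sequence, whose single nonzero differential is multiplication by $\chi(S^n)=2$, explicitly noting that the same facts follow from the Gysin sequence, which is exactly the computation you carry out.
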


\begin{proof}
Since $D_m(S^n)$ has been identified with $V_{n+1, 2}$, the identification of $D_m(S^n)$ with the set of unit tangent vectors of $S^n$ is obvious. From the spectral sequence of this fibration with
$$
E_2(\pi_1) = H^*(S^n)\otimes H^*(S^{n-1})
$$
we see that the only nonzero differential is $d_n : E_n^{0, n-1}\to E_n^{n, 0}$, which is multiplication by $2$ (the Euler characteristic of $S^n$). This implies all the needed facts (which also follow from the Gysin sequence).
\end{proof}

Since $H^{2n-1}(D_m(S^n))=\mathbb Z$, we conclude that $D_m(S^n)$ is an orientable manifold. The following lemma shows that when $n$ is even $D_m(S^n)/D_{2m}$ is also an orientable manifold.

\begin{lem}
\label{orientationpres}
For $m \geq 3$ and even $n,$ the action of the dihedral group $D_{2m}$ on $D_m(S^n)$ preserves orientation.
\end{lem}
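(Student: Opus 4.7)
The plan is to verify that the two standard generators of $D_{2m}$ act by orientation-preserving diffeomorphisms on $D_m(S^n)\cong V_{n+1,2}$; since $D_{2m}$ is generated by any cyclic rotation of order $m$ together with a single reflection, this will suffice.

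First consider the cyclic generator $\tau:(x_1,\ldots,x_m)\mapsto(x_2,\ldots,x_m,x_1)$. Writing $x_k=\cos((k-1)\cdot 2\pi/m)\,x+\sin((k-1)\cdot 2\pi/m)\,y$ and solving for the new frame $(x',y')$ that represents $(x_2,x_3,\ldots)$ under the identification $D_m(S^n)\cong V_{n+1,2}$, a direct computation gives
\[
\tau(x,y)=\bigl(\cos(2\pi/m)\,x+\sin(2\pi/m)\,y,\ -\sin(2\pi/m)\,x+\cos(2\pi/m)\,y\bigr),
\]
which is simply rotation by $2\pi/m$ of the orthonormal frame inside the $2$-plane $\mathrm{span}(x,y)$. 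Replacing $2\pi/m$ by a parameter $\theta\in[0,2\pi/m]$ then yields a smooth isotopy of self-diffeomorphisms of $V_{n+1,2}$ from the identity to $\tau$, so $\tau$ preserves orientation (irrespective of the parity of $n$).

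Next consider the reflection $\sigma$ that fixes $x_1$ and swaps $x_k$ with $x_{2-k\bmod m}$. Since $x_m$ is obtained from $x_1$ by rotating by $-2\pi/m$ in $\mathrm{span}(x,y)$, while the new second vertex must arise by rotating $+2\pi/m$ in $\mathrm{span}(x',y')$, one reads off $\sigma(x,y)=(x,-y)$. I would then invoke the sphere bundle $\pi_1:V_{n+1,2}\to S^n$ from Lemma~\ref{st-bundle}, whose fibre over $x$ is the unit tangent sphere $S^{n-1}\subset T_xS^n$. The map $\sigma$ covers the identity of the base and acts on each fibre by the antipodal map of $S^{n-1}$, whose degree is $(-1)^n$. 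For $n$ even this degree equals $+1$, so combined with the standard fact that for an orientable fibre bundle a self-map covering $\id$ and preserving fibre orientations preserves the total orientation, this shows $\sigma$ preserves the orientation of $V_{n+1,2}$.

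The only real obstacle is bookkeeping: one has to apply the identification $D_m(S^n)\cong V_{n+1,2}$ consistently (so that the reversal of cyclic order under $\sigma$ is correctly recorded as $y\mapsto-y$), and one needs the hypothesis that $n$ is even precisely to make the antipodal map on the fibre $S^{n-1}$ orientation-preserving. Both steps are routine once the formulas for $\tau$ and $\sigma$ on the frame $(x,y)$ have been set up.
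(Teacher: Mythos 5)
Your proposal is correct and follows essentially the same route as the paper: an isotopy (rotation of the frame through the plane it spans) shows the cyclic part preserves orientation, and the reflection fixing $x_1$ acts as $(x,y)\mapsto(x,-y)$, i.e.\ as the antipodal map on the odd-dimensional fibres $S^{n-1}$ of the sphere bundle, which is orientation-preserving when $n$ is even.
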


\begin{proof}
We first observe that the action of the cyclic subgroup $\mathbb Z_m$ preserves orientation.  Indeed, the action of $\mathbb Z_m$ extends to an action of the circle group, in which $e^{i \vartheta}$ maps a pair $(x,y)$ in the corresponding Stiefel manifold to $(\cos (\vartheta)\,x  + \sin (\vartheta)\, y, -\sin (\vartheta)\, x + \cos (\vartheta)\, y);$, so that $(x,y)$ is rotated through the angle $\vartheta$ in the plane they span; 
hence the action of $\mathbb Z_m$ preserves orientation.  It remains to show that one element of $D_{2m}$ not in its cyclic subgroup also preserves orientation, and for this we take the element which sends an element $(x_1, x_2, \ldots, x_m) \in D_m(S^n)$ to the element $(x_1, x_m, \ldots, x_2)$ listed in the reverse cyclic order.  Then each element $(x,y)$ of the corresponding Stiefel manifold is sent to $(x, -y).$ Hence we obtain an involution of the Stiefel manifold which for the bundle projection $(x,y) \mapsto x$ can be described as an automorphism which fixes the base $S^n$ and acts as the antipodal involution on each fiber.  The fibers are spheres of odd dimension, on which the antipodal map preserves orientation; hence the orientation of the Stiefel manifold is also preserved.  
\end{proof}

Denote by $\mathbb Z_2\subset D_{2m}$ the subgroup generated by the flip fixing $x_1$ (which appeared in the previous proof). The proof of Lemma~\ref{reduction2} yields the following reduction:

\begin{lem}
\label{reduction3}
If the fibration $\varepsilon_{m, S^n}$ has a $D_{2m}$-equivariant section over $D_m(S^n)$ then there exists a $D_{2m}$-equivariant map $h: D_m(S^n)\to S^n$, $\mathbb Z_2$-equivariantly homotopic to the composition $\pi_1\circ \iota$, where $\iota : D_m(S^n)\to (S^n)^{\times m}$ is the natural inclusion.
\end{lem}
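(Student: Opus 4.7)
The plan is to adapt the arguments of Lemmas~\ref{reduction1} and~\ref{reduction2} verbatim, changing only the ambient group from $\Sigma_m$ to its subgroup $D_{2m}$ and the domain from $F_m(S^n)$ to the $D_{2m}$-invariant subspace $D_m(S^n)$. Since every construction used in those proofs is built from the actions of $\Sigma_m$ on $J_m$ and $(S^n)^{\times m}$, restricting the group and the base changes nothing of substance; it is essentially a bookkeeping exercise. Only one direction of the ``if and only if'' of Lemma~\ref{reduction2} is needed.

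First I would reproduce the adjointness step from the proof of Lemma~\ref{reduction1}. Starting from a $D_{2m}$-equivariant section that assigns to each $(x_1,\ldots,x_m) \in D_m(S^n)$ a map $g_{(x_1,\ldots,x_m)}: J_m \to S^n$, form the adjoint $g: D_m(S^n) \times J_m \to S^n$, its restrictions $g_i$ to the $m$ branches of $J_m$, and the combined homotopy $\tilde{g}: D_m(S^n) \times I \to (S^n)^{\times m}$. The equivariance of the section translates, exactly as in Lemma~\ref{reduction1}, into $\tilde{g}(\cdot,t)$ being $D_{2m}$-equivariant for every $t \in I$; the condition that the $I_i$ are glued at $t=0$ forces $\tilde{g}(\cdot,0)$ to take values in the thin diagonal; and $\tilde{g}(\cdot,1) = \iota$. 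Writing $\tilde{g}(\cdot,0) = \Delta \circ h$ defines a $D_{2m}$-equivariant map $h: D_m(S^n) \to S^n$ together with a $D_{2m}$-equivariant homotopy from $\Delta \circ h$ to $\iota$.

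Second, I would post-compose the homotopy $\tilde g$ with the first-coordinate projection $\pi_1: (S^n)^{\times m} \to S^n$. The map $\pi_1$ is equivariant precisely for those elements of $D_{2m}$ that stabilize the index $1$. In the dihedral group this stabilizer consists of the identity together with the unique reflection whose axis passes through $x_1$; that is, it is exactly the $\mathbb{Z}_2 \subset D_{2m}$ singled out just before the statement of the lemma. Consequently $\pi_1 \circ \tilde{g}$ is a $\mathbb{Z}_2$-equivariant homotopy between $\pi_1 \circ \iota$ and $\pi_1 \circ \Delta \circ h = h$, which is the conclusion of the lemma.

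The only point needing verification beyond a symbol-by-symbol transcription of the earlier proofs is the identification of the stabilizer of index $1$ in $D_{2m}$ with the $\mathbb{Z}_2$ of the statement, and this is immediate from the dihedral action on a regular $m$-gon: a reflection is determined by its axis, and exactly one reflection axis passes through $x_1$. I do not foresee any real obstacle; the lemma is in effect the observation that the reduction of Section~\ref{reduction-sec} restricts cleanly to the space of regular polygons with the dihedral action.
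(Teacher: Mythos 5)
Your proposal is correct and is exactly the paper's argument: the paper proves Lemma~\ref{reduction3} by simply invoking the proof of Lemma~\ref{reduction2} (restricted to the subgroup $D_{2m}$ and the invariant subspace $D_m(S^n)$), which is precisely the adjointness step followed by post-composition with $\pi_1$ that you carry out. Your identification of the stabilizer of the index $1$ in $D_{2m}$ with the $\mathbb Z_2$ generated by the flip fixing $x_1$ is also the same as the paper's.
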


\section{A note on triangles}
\label{tri-sec}
In the particular case $m=3$ we know even more. Define the function $E : F_3(S^n)\to \mathbb R$ by
$$
E(a,b,c) = \frac{1}{|a-b|} + \frac{1}{|b-c|} + \frac{1}{|c-a|}.
$$
This function is smooth and proper. A configuration $(a,b,c)\in F_3(S^n)$ is a critical point for $E$ if and only if $abc$ is a regular triangle centered at the origin. This can be proved in two steps: 
\begin{itemize}
\item 
If the affine $2$-flat $L$ spanned by $a,b,c$ does not contain the origin then we move $L$ towards the origin keeping $\triangle abc$ in it homothetic to itself. This motion decreases $E(a,b,c)$ with negative derivative.
\item 
When $0\in L$ express the side lengths of $\triangle abc$ by $2\sin\alpha$, $2\sin\beta$, and $2\sin\gamma$ for positive $\alpha,\beta,\gamma$ with $\alpha+\beta+\gamma = \pi$ so that 
$$
2E = \frac{1}{\sin\alpha} + \frac{1}{\sin\beta} + \frac{1}{\sin\gamma}.
$$ 
Differentiating twice implies that $E$ is strictly convex and therefore has a unique critical point (a minimum) at $\alpha=\beta=\gamma=\frac{\pi}{3}$.
\end{itemize}

Therefore the gradient flow of $E$ establishes a $\Sigma_3$-equivariant deformation retraction of $F_3(S^n)$ onto $D_3(S^n)$.

Another way to establish an equivariant homotopy equivalence in the case $n=2$ is to first identify $F_3(S^2)$ with the group of M\"obius transformations 
$\mathrm{PSL}(2, \mathbb C)$ of the Riemann sphere (it is classical that triples of points in $\mathbb CP^1=S^2$ admit a simply transitive action by $\mathrm{PSL}(2, \mathbb C)$), and to then notice that $\mathrm{SO}(3)$ is a maximal compact subgroup of 
$\mathrm{PSL}(2, \mathbb C).$
A non-equivariant homotopy equivalence $F_3(S^n)\simeq D_3(S^n)$ is established by E.R.~Fadell and S.Y.~Husseini in~\cite[Chapter III, Proposition~1.1, p.~32]{fh2001}.

Finally we obtain the following:

\begin{lem}
\label{reduction-tri}
The fibration $\varepsilon_{3, S^n}$ has a $\Sigma_3$-equivariant section over $F_3(S^n)$ if and only if there exists a $\Sigma_3$-equivariant map $h: D_3(S^n)\to S^n$, $\mathbb Z_2$-equivariantly homotopic to the composition $\pi_1\circ \iota$, where $\iota : D_3(S^n)\to (S^n)^{\times 3}$ is the natural inclusion.
\end{lem}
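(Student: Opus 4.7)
The plan is to combine Lemma~\ref{reduction2} (specialized to $m=3$, so $\Sigma_{m-1}=\mathbb Z_2$) with the $\Sigma_3$-equivariant deformation retraction $F_3(S^n)\to D_3(S^n)$ furnished by the gradient flow of the energy $E$ discussed above. Since $D_6 \cong \Sigma_3$, equivariance under the dihedral group coincides with $\Sigma_3$-equivariance, so there is no mismatch of symmetry groups to worry about.

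First I would treat the ``only if'' direction. Suppose a $\Sigma_3$-equivariant section exists. By Lemma~\ref{reduction2} we obtain a $\Sigma_3$-equivariant map $h' : F_3(S^n)\to S^n$ together with a $\mathbb Z_2$-equivariant homotopy $K' : F_3(S^n)\times I\to S^n$ between $h'$ and $\pi_1\circ\iota'$, where $\iota' : F_3(S^n)\hookrightarrow (S^n)^{\times 3}$. Since $D_3(S^n)$ is $\Sigma_3$-invariant inside $F_3(S^n)$, restrict both $h'$ and $K'$ to $D_3(S^n)$ to obtain the desired $\Sigma_3$-equivariant $h:D_3(S^n)\to S^n$ and a $\mathbb Z_2$-equivariant homotopy to $\pi_1\circ\iota$.

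For the ``if'' direction, suppose we have $h:D_3(S^n)\to S^n$ and a $\mathbb Z_2$-equivariant homotopy $K:D_3(S^n)\times I\to S^n$ between $h$ and $\pi_1\circ\iota$. Section~\ref{tri-sec} provides a $\Sigma_3$-equivariant retraction $r:F_3(S^n)\to D_3(S^n)$ together with a $\Sigma_3$-equivariant homotopy $H:F_3(S^n)\times I\to F_3(S^n)$ from $\id$ to $\iota_D\circ r$, where $\iota_D:D_3(S^n)\hookrightarrow F_3(S^n)$. Define $h' := h\circ r : F_3(S^n)\to S^n$, which is $\Sigma_3$-equivariant. I would then construct a $\mathbb Z_2$-equivariant homotopy from $\pi_1\circ\iota'$ to $h'$ by concatenating two homotopies: the $\Sigma_3$-equivariant homotopy $t\mapsto \pi_1\circ\iota'\circ H_t$ (which moves $\pi_1\circ\iota'$ to $\pi_1\circ\iota\circ r$), followed by the $\mathbb Z_2$-equivariant homotopy $t\mapsto K_t\circ r$ (which moves $\pi_1\circ\iota\circ r$ to $h\circ r=h'$). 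Lemma~\ref{reduction2} then supplies the desired $\Sigma_3$-equivariant section.

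The only delicate point is compatibility of symmetry groups in the concatenation: the first homotopy is $\Sigma_3$-equivariant and hence $\mathbb Z_2$-equivariant, and $r$ is $\Sigma_3$-equivariant, so composing a $\mathbb Z_2$-equivariant homotopy with $r$ stays $\mathbb Z_2$-equivariant. There are no other obstacles, as everything is bookkeeping once the gradient-flow retraction of Section~\ref{tri-sec} is in hand.
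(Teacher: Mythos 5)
Your argument is correct and is essentially the paper's intended proof: the paper obtains this lemma precisely by combining Lemma~\ref{reduction2} (for $m=3$) with the $\Sigma_3$-equivariant gradient-flow deformation retraction of $F_3(S^n)$ onto $D_3(S^n)$, exactly as you do. One cosmetic slip: the homotopy $t\mapsto \pi_1\circ\iota'\circ H_t$ is only $\mathbb Z_2$-equivariant rather than $\Sigma_3$-equivariant, since $\pi_1$ is merely $\mathbb Z_2$-equivariant when $S^n$ carries the trivial action, but $\mathbb Z_2$-equivariance is all your concatenation requires.
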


\section{The cohomology of dihedral groups}
\label{dihedral-coh-sec}
Before proving Theorems~\ref{mult-path-2mod4} and \ref{tri-path-0mod4} we need some facts about the cohomology of dihedral groups and the spaces $D_m(S^n)/D_{2m}$. The first lemma follows from the explicit description of the additive structure of $H^*(D_{2m})$ given by D.~Handel in the proofs of Theorems~5.2 and 5.3 in~\cite{han1993}:

\begin{lem}
\label{odd-torsion-group}
For $i>0$, if the cohomology group $H^i(D_{2m})$ fails to be $2$-primary then $i\equiv 0 \pmod 4$.  

More precisely, suppose that $p$ is an odd prime and that $p^r \|m$, i.e. $m = p^r k$ with $(p,k)=1.$  If $i>0$ and $i \equiv 0 \pmod 4,$ then the $p$-primary component of $H^i(D_{2m})$ is $\mathbb Z_{p^r},$  while if $j \not\equiv 0 \pmod 4$ then $H^j(D_{2m})$ has no $p$-torsion.    
\end{lem}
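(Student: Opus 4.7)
The plan is to reduce to the Sylow $p$-subgroup of $D_{2m}$ for each odd prime $p$, exploit the fact that it is cyclic, and compute the invariants of a small Weyl-type action. Part (a) will follow immediately from (b), since any non-$2$-primary torsion must be $p$-primary for some odd prime, so I focus on (b).

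First I would set up the Sylow reduction. For an odd prime $p$ with $p^r \| m$, the unique Sylow $p$-subgroup $P \cong \mathbb{Z}_{p^r}$ sits inside the cyclic rotation group $C_m \subset D_{2m}$; being characteristic in $C_m$, it is normal in $D_{2m}$. Since $[D_{2m}:P] = 2k$ (with $m = p^r k$) is coprime to $p$, the transfer gives a retraction of the restriction map after $p$-localization, so
\[
H^*(D_{2m}; \mathbb{Z})_{(p)} \;\cong\; H^*(P; \mathbb{Z})_{(p)}^{D_{2m}/P},
\]
with $D_{2m}/P$ acting through the conjugation action of $D_{2m}$ on $P$.

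Next I would identify this action explicitly. The rotation subgroup $C_m$ is abelian, hence centralizes $P$; any reflection $\sigma \in D_{2m}$ satisfies $\sigma r \sigma^{-1} = r^{-1}$ for every rotation $r$, so it normalizes $P$ and acts on it by inversion. Thus the action factors through the quotient $D_{2m}/C_m \cong \mathbb{Z}_2$, with the nontrivial element acting by $x \mapsto x^{-1}$. The last step is to compute invariants in $H^*(\mathbb{Z}_{p^r}; \mathbb{Z}) = \mathbb{Z}[u]/(p^r u)$ with $|u| = 2$: the automorphism $x \mapsto x^{-1}$ induces $-1$ on $H^2$, hence $(-1)^i$ on $H^{2i}$. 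Since $\mathbb{Z}_{p^r}$ has no $2$-torsion for odd $p$, the $\mathbb{Z}_2$-invariants are all of $\mathbb{Z}_{p^r}$ when $i$ is even and zero when $i$ is odd; and the odd-degree integral cohomology of $\mathbb{Z}_{p^r}$ vanishes. Setting $j = 2i$ yields exactly (b).

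The main obstacle is the sign bookkeeping at the very end: this is precisely where the $4$-periodic pattern (rather than the $2$-periodic one natural to cyclic cohomology) is born, and it hinges on the elementary but crucial fact that inversion on $\mathbb{Z}_{p^r}$ acts as $-1$ on $H^2$. Everything else is standard group-cohomology input combined with elementary dihedral-group combinatorics.
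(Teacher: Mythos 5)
Your argument is correct and complete, but it is genuinely different in character from what the paper does: the paper offers no computation at all, simply citing the explicit additive description of $H^*(D_{2m})$ extracted by Handel from the proofs of his Theorems 5.2 and 5.3. Your route --- restrict to the normal Sylow $p$-subgroup $P\cong\mathbb Z_{p^r}$ sitting inside the rotation subgroup, use the transfer (or equivalently the collapse of the Lyndon--Hochschild--Serre spectral sequence, since $H^i(D_{2m}/P;A)=0$ for $i>0$ when $A$ is a $p$-group and the index $2k$ is prime to $p$) to identify the $p$-primary part with the invariants $H^*(P;\mathbb Z)^{D_{2m}/P}$, and then observe that the action factors through the reflection acting by inversion, hence by $(-1)^i$ on $H^{2i}(P;\mathbb Z)\cong\mathbb Z_{p^r}$ --- is the standard ``stable elements'' computation, and it correctly produces $\mathbb Z_{p^r}$ in degrees $\equiv 0\pmod 4$ and nothing otherwise, since $2x=0$ forces $x=0$ in $\mathbb Z_{p^r}$ for $p$ odd. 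All the supporting facts check out: $P$ is characteristic in $C_m$ and hence normal in $D_{2m}$; inversion induces $-1$ on $H^2(\mathbb Z_{p^r};\mathbb Z)\cong\Hom(\mathbb Z_{p^r},\mathbb Q/\mathbb Z)$; and part (a) does reduce to part (b) because $H^i(D_{2m})$ is annihilated by $2m$ for $i>0$. What your approach buys is a self-contained, transparent explanation of where the $4$-periodicity comes from; what the citation buys the paper is brevity plus, implicitly, the $2$-primary information that your localization at odd $p$ deliberately discards (the paper does not need it for this lemma, so nothing is lost).
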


\begin{lem}
\label{odd-torsion-mgons}
If the cohomology group $H^n(D_m(S^n)/D_{2m})$ fails to be $2$-primary then $n \equiv 0 \pmod 4$. If the group $H^{n-1}(D_m(S^n)/D_{2m})$ fails to be $2$-primary then $n\equiv 1 \pmod 4$.
\end{lem}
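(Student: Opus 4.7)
The plan is to pass to a Sylow $\ell$-subgroup of $D_{2m}$ for each odd prime $\ell$ and use the transfer to reduce to a cyclic group computation. Fix an odd prime $\ell$; if $\ell\nmid m$ then $\ell\nmid |D_{2m}|=2m$ and there is no $\ell$-torsion to worry about, so write $\ell^r\parallel m$ with $r\geq 1$ and let $P=\mathbb Z_{\ell^r}$ be the Sylow $\ell$-subgroup of $\mathbb Z_m\subset D_{2m}$. Since $\mathbb Z_m$ is normal in $D_{2m}$ and inversion preserves Sylow subgroups of $\mathbb Z_m$, $P$ is itself normal in $D_{2m}$. The covering $D_m(S^n)/P\to D_m(S^n)/D_{2m}$ has degree $2m/\ell^r$, coprime to $\ell$, so the transfer identifies the $\ell$-primary part of $H^*(D_m(S^n)/D_{2m})$ with the $(D_{2m}/P)$-invariants in $H^*(D_m(S^n)/P)_{(\ell)}$.

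I would compute the cover's cohomology via the Serre spectral sequence of $V_{n+1,2}=D_m(S^n)\to D_m(S^n)/P\to BP$. Because $P\subset\mathbb Z_m$ acts through the $S^1$-rotation of the Stiefel frame used in the proof of Lemma~\ref{orientationpres}, it acts trivially on the fiber cohomology, so $E_2^{p,q}=H^p(P)\otimes H^q(V_{n+1,2})$. At $\ell$-part, $H^p(P)$ is $\mathbb Z_{(\ell)}$ in degree $0$, $\mathbb Z_{\ell^r}$ in each positive even degree, and zero in odd degrees; by Lemma~\ref{st-bundle} for even $n$ (and the Gysin sequence of the unit tangent bundle with vanishing Euler class for odd $n$), $H^q(V_{n+1,2})_{(\ell)}$ is $\mathbb Z_{(\ell)}$ in degrees $\{0,2n-1\}$ for even $n$ and in degrees $\{0,n-1,n,2n-1\}$ for odd $n$ (the $\mathbb Z_2$ in degree $n$ for even $n$ is invisible at odd $\ell$). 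Inspecting total degrees $n$ and $n-1$, the only $E_2$-position carrying $\ell$-torsion is $(n,0)$ when $n$ is even (contributing $\mathbb Z_{\ell^r}$ to $H^n$) and $(n-1,0)$ when $n$ is odd (contributing $\mathbb Z_{\ell^r}$ to $H^{n-1}$). Sparsity of $H^*(V_{n+1,2})$ rules out incoming or outgoing differentials at these positions, so they persist to $E_\infty$ unchanged.

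Finally I would take $(D_{2m}/P)$-invariants. The cyclic quotient $\mathbb Z_m/P$ acts trivially on everything by the same $S^1$-argument, so only the flip $\tau$ is relevant. Conjugation by $\tau$ is inversion on $P$, which negates the degree-$2$ generator of $H^*(P;\mathbb Z)$; hence $\tau$ acts on $H^{2k}(P)=\mathbb Z_{\ell^r}$ by $(-1)^k$, while acting trivially on $H^0(V_{n+1,2})=\mathbb Z$. Thus $\tau$ acts on $E_\infty^{n,0}$ (for even $n$) by $(-1)^{n/2}$, with invariants $\mathbb Z_{\ell^r}$ exactly when $n\equiv 0\pmod 4$; and on $E_\infty^{n-1,0}$ (for odd $n$) by $(-1)^{(n-1)/2}$, with invariants $\mathbb Z_{\ell^r}$ exactly when $n\equiv 1\pmod 4$. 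The other filter pieces contributing to $H^n$ and $H^{n-1}$ at $\ell$-part are $\ell$-torsion-free (e.g.\ $E_\infty^{0,n-1}=\mathbb Z_{(\ell)}$ for odd $n$), and at $\ell$-part no $\ell$-torsion is created by extensions since torsion-free $\mathbb Z_{(\ell)}$-modules are flat. This yields both statements. The main technical step is the identification of the $\tau$-action on $H^*(P;\mathbb Z)$ by $(-1)^k$ in degree $2k$, which follows from inversion sending the first Chern class of a line bundle to its negative.
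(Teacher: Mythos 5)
Your argument is correct, and it reaches the same conclusion by a somewhat different route. The paper's proof runs the integral Cartan--Leray spectral sequence for the full group $D_{2m}$ acting on $D_m(S^n)$, observes (via Lemma~\ref{st-bundle} and Lemma~\ref{orientationpres}) that the only rows are $q=0,n,2n-1$ with the $q=n$ row carrying only $2$-torsion, and then simply \emph{cites} Handel's computation (Lemma~\ref{odd-torsion-group}) to locate the odd torsion of $H^*(D_{2m})$ in degrees $\equiv 0 \pmod 4$. You instead work one odd prime $\ell$ at a time: you pass to the normal Sylow $\ell$-subgroup $P\subset\mathbb Z_m$, use the transfer for the covering of degree prime to $\ell$ to reduce to $(D_{2m}/P)$-invariants of the cyclic cover, run the $\ell$-localized Cartan--Leray spectral sequence for $P$, and then extract the mod-$4$ periodicity from the fact that the flip inverts $P$ and hence acts by $(-1)^k$ on $H^{2k}(P)$. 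In effect you re-derive the relevant portion of Lemma~\ref{odd-torsion-group} rather than quoting it; the backbone (sparsity of $H^*(V_{n+1,2})$, triviality of the rotation action via the $S^1$-extension) is the same. What your version buys is self-containedness and slightly more information (the odd part is exactly $\mathbb Z_{\ell^r}$, and one sees its invariance or annihilation under the flip explicitly); what the paper's version buys is brevity. One caveat applies equally to both arguments: for odd $n$ the group $H^n(D_m(S^n)/D_{2m})$ can contain a free summand (the class pulled back from $S^n$ survives rationally), so the lemma should be read as a statement about the odd-primary \emph{torsion}, which is all that is used in the applications; you correctly restrict attention to $\ell$-torsion and note that the remaining filtration pieces are torsion-free, so your proof establishes the statement in exactly the sense in which the paper uses it.
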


\begin{proof}
Consider the Cartan--Leray spectral sequence~\cite{car1950} with 
$$
E_2^{p,q} = H^p(D_{2m}; H^q(D_m(S^n))),
$$
converging to $H^*(D_m(S^n)/D_{2m})$. By Lemma~\ref{orientationpres} the group $D_{2m}$ acts trivially on the cohomology of $D_m(S^n)$. From Lemma~\ref{st-bundle} it follows that the only nonzero rows of this spectral sequence occur for  $q=0,\,n,\,2n-1$. In the $n$-th row we have $H^p(D_{2m}; \mathbb Z_2)$, which has no odd torsion. Hence all the odd torsion in $H^n(D_m(S^n)/D_{2m})$ and $H^{n-1}(D_m(S^n)/D_{2m})$ comes from the group cohomology and we may apply Lemma~\ref{odd-torsion-group}.
\end{proof}

\begin{lem}
\label{torsion-evengons}
If $m$ is even and $n\equiv 2\pmod 4$, then the group $H^n(D_m(S^n)/D_{2m})$ is a $4$-torsion group. In the particular case $m=4$ the group $H^n(D_4(S^n)/D_8)$ is a $2$-torsion group.
\end{lem}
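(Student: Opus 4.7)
The plan is to apply the Cartan--Leray spectral sequence used in the proof of Lemma~\ref{odd-torsion-mgons}. Since $D_{2m}$ acts trivially on cohomology (Lemma~\ref{orientationpres}) and only the rows $q=0,n,2n-1$ are nonzero (Lemma~\ref{st-bundle}), in total degree $n$ only the positions $(n,0)$ and $(0,n)$ can contribute to $E_\infty$. No differentials touch $(n,0)$, and the only outgoing differential from $(0,n)$ is the transgression $d_{n+1}\colon E_{n+1}^{0,n}\to E_{n+1}^{n+1,0}$. This gives the short exact sequence
\[
0\to H^n(D_{2m};\mathbb Z)\to H^n(D_m(S^n)/D_{2m})\to E_\infty^{0,n}\to 0,
\]
where $E_\infty^{0,n}$ is a subgroup of $H^0(D_{2m};\mathbb Z_2)=\mathbb Z_2$.

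For the first assertion I would show that $H^n(D_{2m};\mathbb Z)$ is annihilated by $2$ when $m$ is even and $n\equiv 2\pmod 4$. Odd torsion is excluded by Lemma~\ref{odd-torsion-group}, and the $2$-primary part is detected by restriction to the $2$-Sylow subgroup, which is a dihedral $2$-group. For such groups, Handel's additive tables \cite{han1993} for $H^*(D_{2m};\mathbb Z)$ show that in degrees $\equiv 2\pmod 4$ the torsion consists only of $\mathbb Z_2$'s; equivalently, this can be extracted by a direct Bockstein analysis of the well-known mod $2$ ring $H^*(D_{2m};\mathbb F_2)$. Plugging this into the short exact sequence yields a group annihilated by $4$, which is the first assertion.

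For the sharper statement at $m=4$, the aim is to show that $E_\infty^{0,n}=0$, i.e.\ that the transgression $d_{n+1}$ is injective. In the first nontrivial case $n=2$ this follows from a direct computation: $V_{3,2}/D_8=\mathrm{SO}(3)/D_8$ is a closed orientable $3$-manifold whose fundamental group is the preimage of $D_8\subset\mathrm{SO}(3)$ in $\mathrm{Spin}(3)$, namely the binary dihedral (generalized quaternion) group $\widetilde D_8$ of order $16$, whose abelianization is $\mathbb Z_2\oplus\mathbb Z_2$; Poincar\'e duality then gives $H^2(\mathrm{SO}(3)/D_8;\mathbb Z)=\mathbb Z_2\oplus\mathbb Z_2=H^2(D_8;\mathbb Z)$, forcing $d_3$ to be injective. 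For general $n\equiv 2\pmod 4$, I would represent the generator of $E_2^{0,n}=\mathbb Z_2$ via Lemma~\ref{st-bundle} as the mod $2$ reduction of $\pi_1^*[S^n]$ and compute its transgression using Bockstein operations in $H^*(D_8;\mathbb F_2)=\mathbb F_2[x,y,w]/(xy)$, identifying it with a nonzero element of $H^{n+1}(D_8;\mathbb Z)$.

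The main obstacle is the general-$n$ case of the $m=4$ statement: the $n=2$ Poincar\'e-duality check is clean, but propagating the conclusion to all $n\equiv 2\pmod 4$ requires careful tracking of how the generator of $H^n(V_{n+1,2};\mathbb Z_2)$ from Lemma~\ref{st-bundle} interacts with the Bockstein structure of $H^*(D_8;\mathbb F_2)$. If this approach gets stuck, a natural fallback is to realize $V_{n+1,2}/D_8$ as a circle bundle over the unoriented Grassmannian $G_{n+1,2}$ via the embedding $D_8\subset O(2)$ and apply the Gysin sequence to compute $H^n$ directly.
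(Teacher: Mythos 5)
Your treatment of the first assertion matches the paper's: the same Cartan--Leray spectral sequence, the same short exact sequence
$0\to H^n(D_{2m};\mathbb Z)\to H^n(D_m(S^n)/D_{2m})\to E_\infty^{0,n}\to 0$ with $E_\infty^{0,n}\subseteq\mathbb Z_2$, and the same appeal to Handel's additive description of $H^*(D_{2m};\mathbb Z)$ to see that $H^n(D_{2m};\mathbb Z)$ is annihilated by $2$ when $n\equiv 2\pmod 4$. (The paper in fact splits into two cases according to whether the transgression on $E_{n+1}^{0,n}$ vanishes, but this is the same argument.) That part is fine.

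The $m=4$ assertion, however, is left with a genuine gap. You reduce it to showing that the transgression $d_{n+1}\colon E_{n+1}^{0,n}\to E_{n+1}^{n+1,0}$ is injective for \emph{all} $n\equiv 2\pmod 4$, verify this only for $n=2$ (the $\mathrm{SO}(3)/D_8$ Poincar\'e-duality computation, which is correct), and then explicitly acknowledge that you do not know how to carry out the Bockstein computation for general $n$. As written this is not a proof: the lemma is used in the main argument precisely for arbitrary $n\equiv 2\pmod 4$, and neither the proposed Bockstein tracking nor the Gysin-sequence fallback is executed (the latter also needs care, since the circle bundle $V_{n+1,2}/D_8\to G_{n+1,2}$ need not be orientable, so the integral Gysin sequence requires twisted coefficients). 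Note also that injectivity of the transgression is sufficient but not necessary --- a split extension with $E_\infty^{0,n}=\mathbb Z_2$ would still give a $2$-torsion group --- so you may be trying to prove more than is needed. The paper sidesteps all of this by observing that $D_4(S^n)/D_8=V_{n+1,2}/D_8$ is homotopy equivalent to the unordered configuration space $B_2(\mathbb RP^n)=F_2(\mathbb RP^n)/\Sigma_2$, whose integral cohomology is computed in the cited work of Dom\'{\i}nguez, Gonz\'alez and Landweber; their Theorem~2.2 gives directly that $H^n(B_2(\mathbb RP^n))$ is a $2$-torsion group for $n\equiv 2\pmod 4$. To complete your argument you would either need to finish the transgression computation for all such $n$, or invoke that identification and the known computation.
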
  

Here an abelian group is called a \emph{$k$-torsion group} for a positive integer $k$ if each of its elements $a$ satisfies $ka=0.$
%Fred Cohen was not happy with this terminology.  His preference is to say that these cohomology groups are 2-torsion groups with exponents at most 4 and 2, respectively.  But I think our choice is fine.

\begin{proof}
It follows from Lemma~\ref{odd-torsion-group} that the group cohomology $H^n(D_{2m})$ is a $2$-torsion group. Consider again the spectral sequence with 
$$
E_2^{p,q} = H^p(D_{2m}; H^q(D_m(S^n)))
$$ 
with blank rows in the range $1\le q\le n-1$. If $d_{n+1}$ is nonzero on 
$$
E_{n+1}^{0, n} = H^0(D_{2m}; \mathbb Z_2) = \mathbb Z_2
$$
then $H^n(D_m(S^n)/D_{2m}) = H^n(D_{2m})$ and the proof is complete.

Otherwise we obtain the exact sequence
$$
0 \to H^n(D_{2m}) \to H^n(D_m(S^n)/D_{2m}) \to H^n(D_m(S^n))  \to 0
$$
and note that $H^n(D_m(S^n)/D_{2m})$ is a $4$-torsion group since $H^n(D_m(S^n)) = \mathbb Z_2.$ 

If $m=4$ we note that $D_4(S^n)/D_8 = V_{n+1,2}/D_8 \simeq F_2(\mathbb RP^n)/\Sigma_2 = B_2(\mathbb RP^n)$. The latter space is studied by C.~Dom\'{\i}nguez, J.~Gonz\'alez and the second author in~\cite{dgl2011}, and~\cite[Theorem~2.2]{dgl2011} implies that $H^n(B_2(\mathbb RP^n))$ is a $2$-torsion group when $n \equiv 2 \pmod 4$.
\end{proof}
%PSL But if $n \equiv 0 \pmod 4$, then this cohomology group contains a copy of $\mathbb Z_4$, so is not a $2$-torsion group.

%RK We may also state some facts about $H^*(D_m(S^n)/\mathbb Z_2)$ in view of Lemma~\ref{reduction3}, but it does not seem to help.

\section{Proof of Theorem~\ref{mult-path-2mod4}}
\label{mult-path-2mod4-sec}
We are going to use reasoning in the spirit of the definition of the Hopf invariant of a map $f: S^{2n-1} \to S^n$ by means of the mapping cone and the cup product.   

Using Lemma~\ref{reduction3}, we assume that a continuous map $h : D_m(S^n)\to S^n$ is $D_{2m}$-equivariant and $\mathbb Z_2$-equivariantly homotopic to the map  $\pi_1$ described in Lemma~\ref{st-bundle}. 

There results a map $h': D_m(S^n)/D_{2m}\to S^n$ so that $h$ admits the factorization $h=h'\circ q,$ where $q : D_m(S^n) \to D_m(S^n)/D_{2m}$ is the orbit projection.  Consider the mapping cones $X = S^n \cup_h CD_m(S^n)$ and $X' = S^n\cup_{h'} C(D_m(S^n)/D_{2m})$ and the corresponding commutative diagram:
$$
\begin{CD}
D_m(S^n) @>{h}>> S^n @>{\lambda}>> X @>>> S(D_m(S^n))\\
@VVV @| @V{\pi}VV @VVV\\
D_m(S^n)/D_{2m} @>{h'}>>S^n @>{\mu}>> X' @>{\nu}>> S(D_m(S^n)/D_{2m}).
\end{CD}
$$
There results a commutative diagram of long exact sequences of cohomology groups (note that $H^n(S(D_m(S^n))) \cong H^{n-1}(D_m(S^n))=0$):
\begin{equation}
\label{long-exact}
\begin{CD}
0 @>>> H^n(X) @>{\lambda^*}>> H^n(S^n) @>{h^*}>> H^n(D_m(S^n))\\
@AAA @A{\pi^*}AA @| @AAA \\
H^{n-1}(D_m(S^n)/D_{2m}) @>{\nu^*}>> H^n(X') @>{\mu^*}>> H^n(S^n) @>{h'^*}>> H^n(D_m(S^n)/D_{2m}).
\end{CD}
\end{equation}

The map $h$ is non-equivariantly homotopic to $\pi_1$. From Lemma~\ref{st-bundle} it follows that the space $X$ is homotopy equivalent to the Thom space of the tangent bundle of $S^n$, so its cohomology is isomorphic to $\mathbb Z$ in dimensions $0, n, 2n$ and zero in other dimensions. Denote generators by $x\in H^n(X)$ and $y\in H^{2n}(X)$. From the definition of the Euler class it follows that $x\smallsmile x = 2y$ for a suitable choice of the generator $y.$

Denote by $\alpha$ a generator of $H^n(S^n)$. By Lemma~\ref{st-bundle} in the upper row of (\ref{long-exact}) the map $h^*$ is a surjection and we have $\lambda^*(x) = \pm 2\alpha$.

Let us describe the lower row of (\ref{long-exact}). By Lemma~\ref{odd-torsion-mgons} and the hypothesis that $n \equiv 2 \pmod{4},$ the groups $H^n(D_m(S^n)/D_{2m})$ and $H^{n-1}(D_m(S^n)/D_{2m})$ are $2$-primary. Hence the group $H^n(X')/\nu^*(H^{n-1}(D_m(S^n)/D_{2m}))$ is isomorphic to $\mathbb Z$; denote by $x'$ an element of $H^n(X')$ which represents a generator of this infinite cyclic quotient group. It is clear that $\mu^*(x') = \pm s\alpha$ where $s$ is a power of $2.$ Since $\lambda^*(x) = \pm 2\alpha$ and $\mu^*(x')=\pm s\alpha$ it follows that $\pi^*(x') = \pm tx$, where $t$ is a power of $2,$ possibly equal to $1.$  (Observe that $s = 2t.$)  From the diagram (another part of the map between the long exact sequences)
$$
\begin{CD}
0 @>{h^*}>> H^{2n-1}(D_m(S^n)) @>>> H^{2n}(X) @>{\lambda^*}>> 0\\
@. @AAA @A{\pi^*}AA @. \\
0 @>{h'^*}>>H^{2n-1}(D_m(S^n)/D_{2m}) @>{\nu^*}>> H^{2n}(X') @>{\mu^*}>> 0
\end{CD}
$$
we conclude that the group $H^{2n}(X')$ is isomorphic to $\mathbb Z$ with a generator $y'$ such that $\pi^*(y')= 2my$, since the manifolds $D_m(S^n)$ and $D_m(S^n)/D_{2m}$ are orientable by Lemma~\ref{orientationpres}.

For some integer $c$ we have $x'\smallsmile x' = cy'$; this equality does not depend on adding an element of $\nu^*(H^{n-1}(D_m(S^n)/D_{2m}))$ to $x'$. Then applying $\pi^*$ we obtain $t^2 x\smallsmile x = 2mcy$, and therefore $t^2 = mc$. From the latter equation it is clear that $m$ must be a power of $2$. In this theorem $m\ge 3$, so in particular $m$ is even. Now we apply Lemma~\ref{torsion-evengons} to conclude that $s\le 4$, so $t=s/2\le 2$ and the equality $t^2 = mc$ is possible only for $m=4$ and $t=2.$  But applying Lemma~\ref{torsion-evengons} again for $m=4$ we obtain that $s\le 2$, and therefore $t=1$, a contradiction.  This completes the proof of Theorem 1.5.

\section{Proof of Theorem~\ref{tri-path-0mod4}}
\label{tri-path-0mod4-sec}

We repeat the above reasoning literally and only have to show that the map $\mu^*$ in (\ref{long-exact}) is multiplication by a power of $2$ on generators. In order to show this we have to prove that the image of the map $h'^* : H^n(S^n) \to H^n(D_3(S^n)/\Sigma_3)$ lies in the $2$-primary component of $H^n(D_3(S^n)/\Sigma_3)$.

Besides the $2$-primary component of $H^n(D_3(S^n)/\Sigma_3)$ there exists a $3$-primary component, isomorphic to $\mathbb Z_3$ (see Lemma~\ref{odd-torsion-group}). So we pass to modulo $3$ cohomology and to the cyclic subgroup $\mathbb Z_3\subset \Sigma_3$. Assume the map $h$ factors through $h'' : D_m(S^n)/\mathbb Z_3\to S^n$. Now we have to prove that
$$
h''^* : H^n(S^n; \mathbb F_3) \to H^n(D_3(S^n)/\mathbb Z_3; \mathbb F_3)
$$
is the zero map. The space $D_3(S^n)$ is a mod $3$ cohomology sphere having dimension $2n-1$, so the cohomology ring $H^*(D_3(S^n)/{\mathbb Z_3}; \mathbb F_3)$ is a truncation of the group cohomology, i.e. it has generators $u, w$ with $\dim u = 1$, $\dim w = 2$, satisfying the relations ($\beta$ denotes the Bockstein homomorphism)
$$
u^2 = 0,\quad \beta(u)=w, \quad w^n = 0.
$$

Note that a generator $\alpha\in H^n(S^n; \mathbb F_3)$ is annihilated by any cohomology operation of positive degree. But the Steenrod reduced powers of $w^{n/2}\in H^n(D_3(S^n)/\mathbb Z_3; \mathbb F_3)$ satisfy
$$
\mathcal P^i(w^{n/2}) = \binom{n/2}{i} w^{n/2+2i}.
$$
So if $\alpha$ maps to $c w^{n/2}$ with $c \in \mathbb{F}_3^*$, then $\binom{n/2}{i} \equiv 0 \pmod{3}$ for $i=1,\ldots,n/4 -1$.  Equivalently, putting $N = n/4$, $\binom{2N}{i} \equiv 0 \pmod{3}$ for $i=1,\ldots, N-1$.  By hypothesis $N$ does not have the form $3^s$ with $s \geq 0$, so a contradiction follows from:  
%Because of the relation $w^n=0$ we have to consider the binomial coefficients $\binom{n/2}{i}$ for $i=1,\ldots, n/4-1$. Equivalently, we put $N=n/4$ and consider the binomial coefficients $\binom{2N}{i}$, noting the following:

\begin{lem} 
For a positive integer $N$, all binomial coefficients $\binom{2N}{i}$ vanish mod $3$ for $0<i<N$ if and only if $N$ has the form $3^s$ for some $s\geq 0$.
\end{lem}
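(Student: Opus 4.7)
The plan is to reduce both implications to a combinatorial condition on base-$3$ digit expansions via Lucas' theorem. Writing $2N=\sum_{j\ge 0} d_j\, 3^j$ and $i=\sum_{j\ge 0} b_j\, 3^j$ with digits in $\{0,1,2\}$, Lucas' theorem gives $\binom{2N}{i}\not\equiv 0\pmod 3$ precisely when $b_j\le d_j$ for every $j$; the entire question then becomes one about which admissible $i$ can lie in the open range $0<i<N$.

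The easy direction is immediate: if $N=3^s$ then $2N=2\cdot 3^s$ has a single nonzero base-$3$ digit (equal to $2$) at position $s$, so the admissible values of $i$ are exactly $\{0,\, 3^s,\, 2\cdot 3^s\}$, and none of these lies strictly between $0$ and $N$.

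For the converse I would first record the key dichotomy that $2N$ has only one nonzero base-$3$ digit if and only if $N$ is a power of $3$: one nonzero digit forces $2N=c\cdot 3^j$ with $c\in\{1,2\}$, and since $2N$ is even only $c=2$ is possible. Contrapositively, when $N$ is not of the form $3^s$, the base-$3$ expansion of $2N$ has at least two nonzero digits; letting $K$ be the top nonzero position and picking any other nonzero position $j_0<K$, I would take $i=3^{j_0}$, which is admissible by Lucas. The one point requiring explicit verification --- and the only arithmetic obstacle in the argument --- is the strict inequality $i<N$: from $d_K\ge 1$ we have $2N\ge 3^K$, hence $N\ge 3^K/2 > 3^{K-1}\ge 3^{j_0}=i$, with the middle inequality strict as soon as $K\ge 1$, which is automatic because $N\ge 2$ whenever $N$ is not a power of $3$. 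This produces the desired $i$ with $0<i<N$ and $\binom{2N}{i}\not\equiv 0\pmod 3$, completing the plan.
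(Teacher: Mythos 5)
Your proof is correct; both directions go through. It is Lucas-based like the paper's, but the route through the hard direction is genuinely different. The paper encodes the hypothesis as the polynomial congruence $(1+t)^{2N}\equiv 1+\binom{2N}{N}t^N+t^{2N}\pmod 3$, writes $(1+t)^{2N}=\bigl((1+t)^N\bigr)^2$, and applies Lucas to the digits of $N$ itself: if $N$ is not a power of $3$ there is a smallest $i$ with $0<i<N$ and $\binom{N}{i}\not\equiv 0$, and the squaring produces the surviving term $2\binom{N}{i}t^i$. You instead apply Lucas directly to $\binom{2N}{i}$ and analyze the base-$3$ digits of $2N$, via the clean dichotomy that $2N$ has a single nonzero ternary digit exactly when $N$ is a power of $3$ (the parity argument forcing that digit to be $2$ is the right observation). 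Your choice $i=3^{j_0}$ for a non-leading nonzero digit position, together with the estimate $N\ge 3^K/2>3^{K-1}\ge 3^{j_0}$, correctly lands $i$ in the open interval $(0,N)$ --- this is precisely the point the paper's "smallest $i$" device also has to secure, and you verify it explicitly. What each approach buys: yours is self-contained at the level of digits and avoids the squaring trick entirely; the paper's transfers the digit analysis from $2N$ to $N$, which is slightly less explicit about why the witness $i$ is below $N$ but packages the "if" direction for free from $(1+t)^{3^s}\equiv 1+t^{3^s}$. One cosmetic remark: your closing justification that $K\ge 1$ "because $N\ge 2$" is unnecessary --- $K\ge 1$ is automatic from the existence of two distinct nonzero digit positions $j_0<K$.
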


\begin{proof}
Let $t$ be an indeterminate. Then the vanishing of the binomial coefficients in the statement of the lemma is equivalent to requiring that
\begin{equation}
\label{binom-eq}
(1+t)^{2N} \equiv 1+\binom{2N}{N} t^N + t^{2N} \pmod{3}.
\end{equation}
In case $N$ is a power of $3$, possibly equal to $1$, this last condition is easily verified (and one learns that the ``middle'' binomial coefficient is congruent to $2$ mod $3$). Of course, it is the converse that is really needed here. Suppose $N$ is not a power of $3$. Then the formula of Lucas for binomial coefficients modulo a prime implies that there is a smallest integer $i,\, 0<i<N$, such that $\binom{N}{i} \not\equiv 0 \pmod{3}$. Then expand $(1+t)^{2N} =((1+t)^N)^2$ to find that there is a nonzero term $2 \binom{N}{i}  t^i$ appearing in the expansion of $(1+t)^{2N}$ mod $3$, which violates the condition (\ref{binom-eq}) at the start of the proof.
\end{proof}

\section{Proof of Theorem~\ref{divp-path-0mod4}}
\label{divp-path-0mod4-sec}
%Now we deduce Corollary~\ref{div3-path-0mod4}. We may uniquely extend any triangle $(a,b,c)\in D_3(S^n)$ to a regular $m$-gon $(a, \ldots, b, \ldots, c,\ldots)\in D_m(S^n)$, where ``$\ldots$'' denote $(m/3-1)$-tuples of points uniformly placed on geodesics $ab$, $bc$, and $ca$ on $S^n$. In this way, we obtain a $\Sigma_3$-equivariant map $\phi : D_3(S^n)\to D_m(S^n)$, which is in fact a homeomorphism. If $h : D_m(S^n) \to S^n$ satisfies the hypothesis of Lemma~\ref{reduction3} then so does $\phi\circ h$. Since the existence of $\phi\circ h$ is ruled out, the map $h$ cannot exist.

%RK Below if the unified proof of Theorem~\ref{divp-path-0mod4} and Corollary~\ref{div3-path-0mod4}.

First we note that both spaces $D_p(S^n)$ and $D_m(S^n)$ are identified with the same $V_{n+1,2}$. The corresponding homeomorphism $\phi: D_p(S^n)\to D_m(S^n)$ is obtained by adding $m/p-1$ further vertices uniformly placed on every geodesic arc $[x_ix_{i+1}]$ between successive (mod $p$) vertices of a $p$-gon. It is clear that $\phi$ is $D_{2p}$-equivariant. If $h : D_m(S^n) \to S^n$ satisfies the hypothesis of Lemma~\ref{reduction3} then so does $\phi\circ h : D_p(S^n)\to S^n$. So we have reduced the problem to the case of regular $p$-gons.

For odd primes $p$ the cohomology $H^*(\mathbb Z_p; \mathbb F_p)$ has structure similar to $H^*(\mathbb Z_3; \mathbb F_3)$ considered above, with a generator $w\in H^2(\mathbb Z_p; \mathbb F_p)$. The space $D_p(S^n)$ is a $(2n-1)$-dimensional mod $p$ homology sphere. As above, it suffices to find a cohomology operation on the cohomology $H^*(D_p(S^n)/\mathbb Z_p; \mathbb F_p)$ which acts nontrivially on $w^{n/2}$. 

We examine the mod $p$ Steenrod reduced powers, which satisfy
$$
\mathcal P^i(w^{n/2}) = \binom{n/2}{i} w^{n/2+(p-1)i}.
$$ 
For an indeterminate $t$, notice the following congruence of polynomials mod $p$:
$$
(1+t)^{n/2} = (1+t)^{2n'p^s} \equiv (1 + t^{p^s})^{2n'} = 1 + 2n't^{p^s} +\ \text{higher powers of}\ t.
$$
Assuming that all the Steenrod powers $\mathcal P^i(w^{n/2})$ with $n/2+(p-1)i < n$ vanish, we obtain the following (equivalent) inequalities:
$$
2n'p^s + (p-1)p^s\ge n = 4n'p^s\,\Rightarrow\, (p-1)\ge 2n'\,\Rightarrow\, n'\le \frac{p-1}{2},
$$
which contradicts the hypothesis of the theorem.

\section{Proof of Theorem~\ref{mult-path-0mod4}}
\label{mult-path-0mod4-sec}

In this theorem we use the squaring relation $(w^{n/2})^2=w^n$ instead of the Steenrod operations. In the mod $m$ cohomology of $D_m(S^n)/\mathbb Z_m$ we have $w^n=0$. (Recall that $m$ is assumed to be a prime, $m\geq 5$.)  So we have to consider the whole configuration space and use its cohomology.
 
We need to prove that the map $h''^* : H^n(S^n; \mathbb F_m)\to H^n(D_m(S^n)/\mathbb Z_m; \mathbb F_m)$ is trivial. The map $h''$ is a composition $h'' = \iota\circ h_0''$, where $\iota: D_m(S^n)/\mathbb Z_m\to F_m(S^n)/\mathbb Z_m$ is the natural inclusion and $h_0'' : F_m(S^n)/\mathbb Z_m \to S^n$ comes from the original map $h_0 : F_m(S^n)\to S^n$ satisfying the hypothesis of the theorem and Lemma~\ref{reduction1}.

Again let $w\in H^2(\mathbb Z_m; \mathbb F_m)$ be a generator. We need the following facts about the cohomology of $F_m(\mathbb R^n)/\mathbb Z_m$:

\begin{lem}
\label{FmRn}
Let $m\ge 5$ be a prime. The cohomology $H^n(F_m(\mathbb R^n)/\mathbb Z_m; \mathbb F_m)$ is generated by the power $w^{n/2}$, and its square $w^n$ is nonzero.
\end{lem}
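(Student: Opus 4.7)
I would prove the lemma by running the Cartan--Leray spectral sequence of the free $\mathbb{Z}_p$-action on $F_p(\mathbb{R}^n)$,
\[
E_2^{s,t}=H^s(\mathbb{Z}_p;H^t(F_p(\mathbb{R}^n);\mathbb{F}_p))\Longrightarrow H^{s+t}(F_p(\mathbb{R}^n)/\mathbb{Z}_p;\mathbb{F}_p),
\]
after feeding in two classical inputs about the fiber: $F_p(\mathbb{R}^n)$ is $(n-2)$-connected by iterated Fadell--Neuwirth fibrations, and by F.~Cohen's presentation its mod~$p$ cohomology is supported in degrees $k(n-1)$ for $0\le k\le p-1$, generated by classes $a_{ij}$ of degree $n-1$ indexed by pairs $\{i,j\}\subset\{1,\ldots,p\}$ and subject to the Arnold-type quadratic relations.

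The crucial structural point is that each $H^{k(n-1)}(F_p(\mathbb{R}^n);\mathbb{F}_p)$ with $k\ge 1$ is a \emph{free} $\mathbb{F}_p[\mathbb{Z}_p]$-module. Indeed, $\mathbb{Z}_p$ acts by cyclic permutation of indices, and because $p$ is prime a nontrivial shift $i\mapsto i+c$ fixes no monomial in the $a_{ij}$'s (any stabilizer would force $p\mid c$), so every $\mathbb{Z}_p$-orbit on a chosen monomial basis has size exactly $p$. Consequently $H^s(\mathbb{Z}_p;H^{k(n-1)}(F_p(\mathbb{R}^n);\mathbb{F}_p))=0$ for all $s\ge 1$, and combined with the connectivity of the fiber the only nonzero entries on the $E_2$-page sit either in the bottom row $t=0$ (which equals $H^*(B\mathbb{Z}_p;\mathbb{F}_p)$) or in the left column $s=0$.

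With this shape, I check that the base-row classes $w^{n/2}\in E_2^{n,0}$ and $w^n\in E_2^{2n,0}$ are never killed. Both are automatically permanent cycles, so I only need to rule out incoming differentials. For $w^n$, any $d_r$ landing at $(2n,0)$ has source $(2n-r,r-1)$; for $2\le r\le 2n-1$ this source lies in the vanishing quadrant $s,t\ge 1$, and for $r=2n$ the source bidegree is $(0,2n-1)$ with $2n-1\ne k(n-1)$ for $n\ge 4$, so again zero. For $w^{n/2}$ the only candidate is $d_n:E_n^{0,n-1}\to E_n^{n,0}$; I rule it out using the transfer $\tau$ of the $p$-fold cover $\pi\colon F_p(\mathbb{R}^n)\to F_p(\mathbb{R}^n)/\mathbb{Z}_p$. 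Since $\pi^*\tau=\sum_{g\in\mathbb{Z}_p}g^*$ and the norm map on a free $\mathbb{F}_p[\mathbb{Z}_p]$-module surjects onto its invariants, every element of $E_2^{0,n-1}$ lifts via $\pi^*$ to $H^{n-1}(F_p(\mathbb{R}^n)/\mathbb{Z}_p;\mathbb{F}_p)$ and is therefore a permanent cycle, so $d_n=0$.

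The main obstacle is the freeness claim in the second paragraph: while it is transparent for $k=1$, for $k\ge 2$ one has to exhibit a $\mathbb{Z}_p$-stable monomial basis of $H^{k(n-1)}$ compatible with the Arnold--Cohen quadratic relations before the orbit-counting argument can be applied. Granting this, the spectral-sequence analysis above yields $w^{n/2}\ne 0$ in $H^n(F_p(\mathbb{R}^n)/\mathbb{Z}_p;\mathbb{F}_p)$ and, via the cup square, $w^n=(w^{n/2})^2\ne 0$ in $H^{2n}$, as required.
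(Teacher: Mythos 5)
Your overall strategy --- the Cartan--Leray spectral sequence of the free $\mathbb{Z}_p$-action, fed with the Cohen--Taylor description of $H^*(F_p(\mathbb{R}^n);\mathbb{F}_p)$ as a $\mathbb{Z}_p$-module --- is exactly the paper's. But the structural claim you rest it on, that $H^{k(n-1)}(F_p(\mathbb{R}^n);\mathbb{F}_p)$ is a free $\mathbb{F}_p[\mathbb{Z}_p]$-module for every $k\ge 1$, is not merely hard to verify in the way you indicate; it is \emph{false} in the top degree $k=p-1$. The Poincar\'e polynomial of $F_p(\mathbb{R}^n)$ is $\prod_{i=1}^{p-1}(1+it^{n-1})$, so $\dim_{\mathbb{F}_p}H^{(p-1)(n-1)}=(p-1)!\equiv -1\pmod p$ by Wilson's theorem, whereas a free $\mathbb{F}_p[\mathbb{Z}_p]$-module has dimension divisible by $p$. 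Your orbit-counting heuristic breaks down there precisely because the Arnold relations prevent one from choosing a $\mathbb{Z}_p$-freely permuted monomial basis in top degree. Consequently the asserted shape of the $E_2$-page (nonzero entries only in the bottom row and left column) is wrong: the entire row $t=(p-1)(n-1)$ carries $H^s(\mathbb{Z}_p;H^{(p-1)(n-1)})\ne 0$ for all $s$.

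The correct input, which the paper cites from Cohen and Taylor, is that the cohomology is free over $\mathbb{F}_p[\mathbb{Z}_p]$ except in degrees $0$ and $(p-1)(n-1)$, and the argument must then dispose of the extra top row. This is cheap: any differential out of that row into the bottom row lands in total degree greater than $(p-1)(n-1)$, and $(p-1)(n-1)\ge 4(n-1)\ge 2n$ for $p\ge 5$ and $n\ge 4$, so it cannot touch $w^{n/2}$ or $w^n$. Your two specific checks survive the correction: the sources $(2n-r,r-1)$ with $2\le r\le 2n-1$ have $t\le 2n-2<(p-1)(n-1)$ and so do lie in genuinely free (or zero) rows, and your transfer argument killing $d_n$ on $E_n^{0,n-1}$ is a valid substitute for the paper's appeal to the $H^*(\mathbb{Z}_p;\mathbb{F}_p)$-module structure of the differentials. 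So the proof is repaired by replacing the blanket freeness claim with the cited one and adding the one-line degree estimate above; you should also remark that $H^n$ receives no contribution from the left column since $n\ne k(n-1)$ for any $k$, which is what yields generation by $w^{n/2}$ rather than merely its nonvanishing.
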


%RK I have added the remark explaining the case $m=3$.

\begin{rem}
\label{F3Rn-rem}
For $m=3$ the cohomology $H^n(F_3(\mathbb R^n)/\mathbb Z_3; \mathbb F_3)$ is also generated by the power $w^{n/2}$, but $w^n=0$ because the group $H^{2n}(F_3(\mathbb R^n)/\mathbb Z_3;\mathbb F_3)$ vanishes. Moreover, the group $H^{2n}(F_3(\mathbb S^n)/\mathbb Z_3;\mathbb F_3)=H^{2n}(D_3(\mathbb S^n)/\mathbb Z_3;\mathbb F_3)$ also vanishes and the approach with squaring is not applicable to the case $m=3$.
\end{rem}

\begin{proof}
%RK I have commented the reference to my paper because Fred Cohen told it was known before.
%We mostly follow the proof of~\cite[Lemma~5]{kar2009}. 
The cohomology $H^*(F_m(\mathbb R^n); \mathbb F_m)$ is nonzero only in dimensions $0,n-1,2(n-1),\ldots,(m-1)(n-1)$; and it is composed of free $\mathbb F_m[\mathbb Z_m]$-modules except for dimensions $0$ and $(m-1)(n-1)$, see F.~Cohen and L.~Taylor ~\cite[Theorem~3.4, Corollaries~3.5 and 3.6]{ct1991}. Therefore the only nonzero entries in the $E_2$-term of the Cartan--Leray spectral sequence for $F_m(\mathbb R^n)$ are $H^*(\mathbb Z_m; \mathbb F_m)$ in the zeroth row, something nonzero in dimensions $i(n-1)$ ($0\le i\le m-1$) in the zeroth column, and something nonzero in the $(m-1)(n-1)$-th row. Because of the $H^*(\mathbb Z_m; \mathbb F_m)$-module structure of this spectral sequence the only nonzero differential in this spectral sequence can act from the $(m-1)(n-1)$-th row to the zeroth row, thus preserving $w^{n/2}$ and $w^n$ of dimensions $n, 2n \le (m-1)(n-1)$ (we use $m\ge 5$ and $n\ge 4$ here!) in the term $E_\infty$.
\end{proof}

Evidently, there is a $\Sigma_m$-equivariant inclusion $F_m(\mathbb R^n)\to F_m(S^n)$ and $w^n$ is also nonzero in $H^{2n}(F_m(S^n)/\mathbb Z_m; \mathbb F_m)$. Thus $w^{n/2}$ cannot be a multiple of $h_0''^*(\alpha)$ because $\alpha^2=0$. But this does not exclude the possibility $h''^*(\alpha)\neq 0$ because the group $H^n(F_m(S^n))/\mathbb Z_m; \mathbb F_m)$ may not be generated by $w^{n/2}$.

The inclusion $F_m(\mathbb R^n)\to F_m(S^n)$ is obtained by choosing a base point $x_0\in S^n$ and identifying $\mathbb R^n$ with $S^n\setminus\{x_0\}$. Let us describe the difference set $F_m(S^n)\setminus F_m(\mathbb R^n)$. This difference corresponds to configurations $(x_1,\ldots, x_m)$ such that for one index $i$ we have $x_i=x_0$. The other points $\{x_j\}_{j\neq i}$ form a configuration in $F_{m-1}(\mathbb R^d)$. Hence $F_m(S^n)\setminus F_m(\mathbb R^n)$ is a union of $m$ smooth manifolds $X_1\cup\dots\cup X_m$; each $X_i$ is diffeomorphic to $F_{m-1}(\mathbb R^n)$ and they are permuted freely by $\mathbb Z_m$. Moreover we note that $X_1$ is the preimage of zero under the natural projection map $\pi_1 : F_m(S^n)\to S^n$, so it has a normal framing in $F_m(S^n)$. It follows now that the pair 
$$
(F_m(S^n)/\mathbb Z_m, F_m(\mathbb R^n)/\mathbb Z_m) = (F_m(S^n), F_m(S^n)\setminus X_1)
$$ 
is homotopy equivalent to $(D^n\times X_1, S^n\times X_1)$, where $(D^n, S^n)$ is the $n$-ball and its boundary. Hence the cohomology $H^i(F_m(S^n)/\mathbb Z_m, F_m(\mathbb R^n)/\mathbb Z_m; \mathbb F_m)$ is trivial for $i<n$ and generated by one element $x$ for $i = n$. Now from the long exact sequence of the pair $(F_m(S^n)/\mathbb Z_m, F_m(\mathbb R^n)/\mathbb Z_m)$ we obtain that the group $H^n(F_m(S^n)/\mathbb Z_m; \mathbb F_m)$ is spanned by two elements: $w^{n/2}$ and the image of $x$, which we denote again by $x$. If this image is zero then we are done, so we may assume that $x\neq 0\in H^n(F_m(S^n)/\mathbb Z_m; \mathbb F_m)$. The multiplication is determined by the relations $x^2=0$ (from the cohomology of $(D^n, S^n)$) and $wx=0$, because the pairs $(F_m(S^n), X_i)$ are permuted freely by $\mathbb Z_m$ and the entire cohomology $H^*(F_m(S^n)/\mathbb Z_m, F_m(\mathbb R^n)/\mathbb Z_m; \mathbb F_m)$ is therefore annihilated by $w$. 

Now assume that $h_0''^*(\alpha) = ax + bw^{n/2}$. By taking squares we obtain $(ax+bw^{n/2})^2 = b^2w^n = 0$ and therefore $b = 0$. Hence we have to calculate $y = \iota^*(x)\in H^n(D_m(S^n)/\mathbb Z_m; \mathbb F_m)$. Consider the $(n-1)$-sphere $Y_1\subset D_m(S^n)$ formed by $m$-gons with the first vertex coinciding with $x_0$. It is clear that $y$ is Poincar\'e dual to the cycle 
$$
c_y = \left(\bigcup_{g\in \mathbb Z_m} g(Y_1)\right)/\mathbb Z_m.
$$ 
It is clear that the cycle $c_y$ of $D_m(S^n)/\mathbb Z_m$ is the natural image of the cycle $Y_1$ of $D_m(S^n)$. But the latter cycle represents the generator of $H_{n-1}(D_m(S^n); \mathbb Z)=\mathbb Z_2$. Hence modulo $m$ we have $[S_1]=0$, $[c_y]=0$, and therefore $y = 0$.

Now we have established that $h''^*(\alpha) = 0\in H^n(D_m(S^n); \mathbb F_m)$; the rest of the proof proceeds as in the previous arguments.

\section{Proof of Theorem~\ref{even-path}}
\label{even-sec}

Let us start with the case $m=4$. Consider the subspace $Q_4(S^n)\subset F_4(S^n)$ consisting of $4$-tuples $(x_1, x_2, x_3, x_4)$ satisfying the equations (the distance is measured in the ambient Euclidean $\mathbb R^{n+1}$):
\begin{eqnarray}
\label{q-eq1}
\|x_1 - x_3\| = \|x_2 - x_4\| &=& 2\delta\\
x_1+x_3 + x_2 + x_4 &=& 0\\
\label{q-eq3}
\|x_1+x_3-x_2-x_4\| &=& 4\sqrt{1-\delta^2}.
\end{eqnarray}
Here $\delta$ is a fixed number satisfying $0<\delta<1$. Informally, we consider configurations of four points such that the midpoints of the geodesic segments $[x_1,x_3]$ and $[x_2,x_4]$ are antipodal and the lengths of these segments are fixed. This subspace is obviously $D_8$-invariant. In the particular case $\delta=1/\sqrt{2}$ this space includes squares inscribed in $S^n$.

Consider another configuration space $Q_4(\mathbb R^{n+1})$ given by the same equations (\ref{q-eq1})--(\ref{q-eq3}) but without the assumption that $x_1,\ldots, x_4\in S^n$. If $\delta<1/\sqrt{2}$ we have the inclusion $Q_4(\mathbb R^{n+1})\subset F_4(\mathbb R^{n+1})$. Informally, $Q_4(\mathbb R^{n+1})$ consists of $4$-tuples $(x_1, x_2, x_3, x_4)$ such that the distances $\|x_1-x_3\|$ and $\|x_2-x_4\|$ are fixed, the midpoints of $[x_1,x_3]$ and $[x_2,x_4]$ are antipodal, and the distance between those midpoints is also fixed. Obviously $Q_4(\mathbb R^{n+1})$ can be identified with $S^n\times S^n\times S^n$ because the directions of $x_1+x_3-x_2-x_4$, $x_1-x_3$, and $x_2-x_4$ define uniquely the configuration $(x_1,x_2,x_3,x_4)\in Q_4(\mathbb R^{n+1})$ and can be prescribed arbitrarily. Thus $Q_4(\mathbb R^{n+1})$ is $(n-1)$-connected.

The space $Q_4(S^n)$ is not so simple as $Q_4(\mathbb R^{n+1})$. The map $\pi : Q_4(S^n)\to S^n$ defined by 
\begin{equation}
\label{pi-q-def}
\pi : (x_1,x_2,x_3,x_4)\mapsto \frac{x_1+x_3}{\|x_1+x_3\|}
\end{equation}
makes $Q_4(S^n)$ a bundle over $S^n$ with fiber $S^{n-1}\times S^{n-1}$. More precisely, if we denote by $\tau$ the tangent bundle of $S^n$ with its corresponding sphere bundle $S(\tau)$, then $Q_4(S^n) = S(\tau)\times_{S^n} S(\tau)$. Hence the space $Q_4(S^n)$ is $(n-2)$-connected.

If $n$ grows both spaces $Q_4(S^n)/D_8$ and $Q_4(\mathbb R^{n+1})/D_8$ approach the classifying space $BD_8$. In fact, $Q_4(\mathbb R^{n+1})/D_8$ is a standard approximation to $BD_8$ which is useful in the study of the cohomology $H^*(D_8; \mathbb F_2)$, see the definition of the wreath product of projective spaces ($\widetilde M(q,4)=Q_4(\mathbb R^{q+1})$ and $M(q,4)=Q_4(\mathbb R^{q+1})/D_8$ in our notation) in N.~Hung~\cite[Section~1, p.~254]{hung1990}. We need the following fact about the cohomology of $Q_4(S^n)/D_8$:

\begin{lem}
\label{coh-surj}
The natural map $\kappa_8^* : H^*(D_8;\mathbb F_2)\to H^*(Q_4(S^n)/D_8; \mathbb F_2)$ is surjective in dimensions $\le n$.
\end{lem}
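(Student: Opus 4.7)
My approach is through the Cartan--Leray (Borel) spectral sequence of the free $D_8$-action, i.e.\ the Serre spectral sequence of
$$
Q_4(S^n)\longrightarrow Q_4(S^n)/D_8\longrightarrow BD_8,\qquad E_2^{p,q}=H^p(D_8;H^q(Q_4(S^n);\mathbb F_2)).
$$
The map $\kappa_8^*$ is the edge homomorphism, so its surjectivity in degrees $\le n$ is equivalent to the vanishing of every off-edge term $E_\infty^{p-q,q}$ with $q>0$ and $p\le n$.

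The first step is to compute $H^*(Q_4(S^n);\mathbb F_2)$ from the bundle $S^{n-1}\times S^{n-1}\to Q_4(S^n)=S(\tau)\times_{S^n}S(\tau)\to S^n$. Since the Euler class of $\tau=TS^n$ is $\chi(S^n)\in\{0,2\}$ and hence zero mod $2$, the Serre spectral sequence of this bundle collapses at $E_2$; thus $Q_4(S^n)$ is $(n-2)$-connected, $H^{n-1}(Q_4(S^n);\mathbb F_2)\cong\mathbb F_2^{\,2}$ is spanned by the two fiber classes $u,v$, and $H^n(Q_4(S^n);\mathbb F_2)\cong\mathbb F_2$ is pulled back from the base. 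Consequently, in the Cartan--Leray $E_2$-page the rows with $1\le q\le n-2$ are identically zero, so the only non-edge terms in total degree $\le n$ are $E_2^{0,n-1}$, $E_2^{1,n-1}$, and $E_2^{0,n}$.

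Next I would identify these as $D_8$-modules. The index-$2$ subgroup $V_4=\langle (1\,3),(2\,4)\rangle$ preserves each diagonal of the square and acts antipodally on each $S^{n-1}$-factor, which is trivial on mod-$2$ cohomology; the complementary coset swaps $u\leftrightarrow v$. Hence $H^{n-1}(Q_4(S^n);\mathbb F_2)\cong\operatorname{Ind}_{V_4}^{D_8}\mathbb F_2$, and Shapiro's lemma gives $H^p(D_8;H^{n-1}(Q_4(S^n);\mathbb F_2))\cong H^p(V_4;\mathbb F_2)$; since $H^*(V_4;\mathbb F_2)=\mathbb F_2[a,b]$ with $\deg a=\deg b=1$, we find $E_2^{0,n-1}\cong\mathbb F_2$ and $E_2^{1,n-1}\cong\mathbb F_2^{\,2}$. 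Also, $D_8$ acts on $S^n$ through the antipodal $\mathbb Z_2$, which is trivial on $H^n(S^n;\mathbb F_2)$, so $E_2^{0,n}\cong\mathbb F_2$.

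The crux, and the main obstacle, is to show that all four surviving off-edge classes die at $E_\infty$. Because the rows strictly between $q=0$ and $q=n-1$ are blank, each such class can only die as the source of a transgression: $d_n\colon E_n^{p,n-1}\to E_n^{p+n,0}$ for $p=0,1$, and $d_{n+1}\colon E_{n+1}^{0,n}\to E_{n+1}^{n+1,0}$. My plan is to interpret $u$ and $v$ as fiber fundamental classes of the two $D_8$-equivariant $S^{n-1}$-bundles inside $Q_4(S^n)$, so that by naturality of the transgression $d_n u$ and $d_n v$ are their $D_8$-equivariant mod-$2$ Euler classes in $H^n(D_8;\mathbb F_2)$; similarly the $H^n$-class pulled back from $S^n$ transgresses to the equivariant Euler class of $\tau$. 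Using the known ring structure of $H^*(D_8;\mathbb F_2)$---two degree-$1$ generators $\alpha,\beta$ with $\alpha\beta=0$ and a degree-$2$ generator $\gamma$---together with the Shapiro identification, verifying the required injectivity reduces to checking that the resulting equivariant Euler classes are nonzero in the appropriate bidegrees. This in turn is an equivariant analogue of the classical fact that the Euler class of a sphere bundle equals the transgression of its fiber fundamental class, applied to the specific $D_8$-geometry of $Q_4(S^n)$; compare the wreath-product approximations of $BD_8$ studied in Hung~\cite{hung1990}.
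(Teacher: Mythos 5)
Your setup is sound: reducing surjectivity of the edge homomorphism to the vanishing of $E_\infty^{p,q}$ for $q>0$, $p+q\le n$, the collapse of the Serre spectral sequence of $S^{n-1}\times S^{n-1}\to Q_4(S^n)\to S^n$ (the mod $2$ Euler class of $\tau$ vanishes), and the identification $H^{n-1}(Q_4(S^n);\mathbb F_2)\cong\operatorname{Ind}_{V_4}^{D_8}\mathbb F_2$ with the resulting Shapiro computation of $E_2^{*,n-1}$ are all correct. The gap is that you never actually kill the four off-edge classes: you state a plan and defer the verification, and the plan as written cannot reach all of them. A transgression/Euler-class argument determines $d_n$ only on $E_n^{0,n-1}$ and, via the Leibniz rule, on its $H^*(D_8)$-multiples. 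But under the Shapiro isomorphism the $H^*(D_8)$-module structure on $E_2^{*,n-1}\cong H^*(V_4;\mathbb F_2)$ is given by restriction, and $\operatorname{res}\colon H^1(D_8;\mathbb F_2)\to H^1(V_4;\mathbb F_2)$ has only a one-dimensional image: every homomorphism $D_8\to\mathbb Z_2$ kills the commutator element $(13)(24)$ and hence cannot separate $(13)$ from $(24)$. So at least one generator of $E_2^{1,n-1}\cong\mathbb F_2^2$ is not a product of the bottom row with the transgressive class $u+v$, and its differential requires a separate argument that you do not supply. (Note also that $u$ and $v$ are not individually $D_8$-invariant --- they are swapped --- so ``the two $D_8$-equivariant sphere bundles'' do not separately furnish classes in $E_2^{0,n-1}$; only $u+v$ lives there.)

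For comparison, the paper avoids computing any differentials in the target by quotienting in stages: it views $Q_4(S^n)/D_8$ as a bundle over $\mathbb RP^n$ with fiber $\mathbb RP^{n-1}\times\mathbb RP^{n-1}$ (with $\pi_1(\mathbb RP^n)$ permuting the two factors), compares its Serre spectral sequence with the analogous one for $BD_8=Q_4(\mathbb R^\infty)/D_8$ over $\mathbb RP^\infty$, and invokes Nakaoka's theorem that the latter collapses at $E_2$. The comparison map on $E_2$ is visibly surjective in total degrees $\le n$, and surjectivity on $E_2$ together with collapse of the \emph{source} spectral sequence forces surjectivity on $E_\infty$ and hence on cohomology. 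If you want to complete your route, the cleanest fix is a comparison of the same kind --- for instance with the Borel spectral sequence of the $(n-1)$-connected space $Q_4(\mathbb R^{n+1})\cong S^n\times S^n\times S^n$ via the $D_8$-equivariant inclusion $Q_4(S^n)\hookrightarrow Q_4(\mathbb R^{n+1})$, or with the intermediate quotient $Q_4(S^n)/V_4=P(\tau)\times_{S^n}P(\tau)$ --- rather than an attempt to evaluate the equivariant Euler classes directly.
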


Until the end of this section we assume mod $2$ cohomology and omit the coefficients $\mathbb F_2$ from the notation.

\begin{proof}
Note that $BD_8 = Q_4(\mathbb R^\infty)/D_8$ is a bundle over $\mathbb RP^\infty$ with fiber $\mathbb RP^\infty\times \mathbb RP^\infty$. Its corresponding spectral sequence starts with 
$$
E_2 = H^*(\mathbb Z_2; H^*(\mathbb Z_2)\otimes H^*(\mathbb Z_2) ),
$$
where we identify $H^*(\mathbb RP^\infty)$ with $H^*(\mathbb Z_2)$ and assume that $\mathbb Z_2$ acts on $H^*(\mathbb Z_2)\otimes H^*(\mathbb Z_2)$ by permuting the factors. A lemma of Nakaoka~\cite{naka1961} (see also Theorem~2.1 and the remark about the graded algebra structure after it in I.~Leary~\cite{lea1997}) asserts that this spectral sequence collapses at $E_2$ and there is an isomorphism of graded algebras
$$
H^*(D_8) = H^*(\mathbb Z_2\wr \mathbb Z_2) \cong H^*(\mathbb Z_2; H^*(\mathbb Z_2)\otimes H^*(\mathbb Z_2) ).
$$

Now we consider $Q_4(S^n)/D_8$ as a bundle over $\mathbb RP^n$ with fiber $\mathbb RP^{n-1}\times \mathbb RP^{n-1}$. The homomorphism of $E_2$-terms
$$
\kappa_8^* : H^*(\mathbb Z_2; H^*(\mathbb Z_2)\otimes H^*(\mathbb Z_2) ) \to H^*(\mathbb RP^n; H^*(\mathbb RP^{n-1})\otimes H^*(\mathbb RP^{n-1}) )
$$
is surjective in dimensions $\le n$. Indeed, it is an isomorphism on $E_2^{p,q}$ for $p+q\le n$ and $q\le n-1$, while in $E_2^{0,n}$ it annihilates the element $$
c_1^n\otimes 1+1\otimes c_2^n\in H^0(\mathbb Z_2, H^*(\mathbb RP^\infty)\otimes H^*(\mathbb RP^\infty)),
$$
where we denote the generators of $H^1(\mathbb RP^\infty)$ in the respective factors by $c_1$ and $c_2$. Therefore the resulting homomorphism
$$
\kappa_8^* : H^*(D_8) \to H^*(Q_4(S^n)/D_8)
$$
is surjective in dimensions $\le n$.
\end{proof}

Now we are going to use Lemma~\ref{reduction2}. The subgroup of $D_8$ that fixes $x_1$ is the copy of $\mathbb Z_2$ that exchanges $x_2$ and $x_4$. The map $\pi_1: Q_4(S^n)\to S^n$ of Lemma~\ref{reduction2} is $\mathbb Z_2$-equivariantly homotopic to the map $\pi$ defined in (\ref{pi-q-def}) by connecting $x_1$ and $\frac{x_1+x_3}{\|x_1+x_3\|}$. By Lemma~\ref{reduction2} we assume that $\pi$ is $\mathbb Z_2$-equivariantly homotopic to some $D_8$-equivariant map $h : Q_4(S^n)\to S^n$.

Denote the corresponding maps by $h_2 : Q_4(S^n)/\mathbb Z_2 \to S^n$, $h_8 : Q_4(S^n)/D_8\to S^n$, and $q: Q_4(S^n)/\mathbb Z_2\to Q_4(S^n)/D_8$ so that $h_2 = h_8\circ q$. From the commutative diagram
$$
\begin{CD}
H^n(Q_4(S^n)/\mathbb Z_2) @<{q^*}<< H^n(Q_4(S^n)/D_8)\\
@A{\kappa_2^*}AA @A{\kappa_8^*}AA\\
H^n(\mathbb Z_2) @<<< H^n(D_8)
\end{CD}
$$
and Lemma~\ref{coh-surj} we obtain that the image $q^*(H^n(Q_4(S^n)/D_8))$ (and therefore the image $h_2^*(H^n(S^n))$) is contained in the image $\kappa_2^*(H^n(\mathbb Z_2))$.

The space $Q_4(S^n)/\mathbb Z_2$ is a bundle $S(\tau)\times_{S^n} P(\tau)$ over $S^n$. Here $P(\tau)$ is the projective bundle corresponding to $\tau$ and the bundle projection is the map $\pi$ defined in (\ref{pi-q-def}). The spectral sequence of this bundle collapses and therefore 
\begin{equation}
\label{byZ2-coh}
H^*(Q_4(S^n)/\mathbb Z_2) \cong H^*(S^{n-1})\otimes H^*(\mathbb RP^{n-1})\otimes H^*(S^n)
\end{equation}
additively. Since all the Stiefel--Whitney classes of $\tau$ vanish we in fact obtain an isomorphism of graded algebras. It follows that the image of $\kappa_2^*$ is multiplicatively generated by $H^1(\mathbb RP^{n-1})$ and does not contain the nontrivial element of $\pi^*(H^n(S^n))$. Thus the conditions of Lemma~\ref{reduction2} cannot be satisfied. Another way to prove this without the multiplicative structure in (\ref{byZ2-coh}) is to note that there is an $(n-1)$-sphere bundle $\varphi : Q_4(S^n)/\mathbb Z_2\to P(\tau)$ and the maps $\kappa_2^*$ and $\pi^*$ pass through $H^*(P(\tau))$ (because the maps $\kappa_2$ and $\pi$ factor through $\varphi$). The vanishing of Stiefel--Whitney classes of $\tau$ ensures the isomorphism of graded algebras $H^*(P(\tau)) \cong H^*(\mathbb RP^{n-1})\otimes H^*(S^n)$; hence the image of $\kappa_2^*$ is generated by $H^1(\mathbb RP^{n-1})$ and cannot contain the generator of $\pi^*(H^n(S^n))$. 

Finally we treat the case of arbitrary even $m=2\ell > 4$ by producing a $D_8$-equivariant embedding $Q_4(S^n)\to F_m(S^n)$. This embedding is constructed by replacing the pair $(x_1,x_3)$ with $\ell$ points (including the end points) distributed uniformly on the geodesic segment $[x_1, x_3]$, and the same for the segment $[x_2,x_4]$. A $\Sigma_m$-invariant map $h_m : F_m(S^n)\to S^n$ therefore induces a $D_8$-invariant map $h_4 : Q_4(S^n)\to S^n$ which has already been shown to be impossible.

\section{Remarks about the case $m=3$ and $n=4$}
\label{m3-n4-sec}

In the open case $m=3$, $n=4$ it would be sufficient (following the approach in Section~\ref{tri-path-0mod4-sec}) to find a cohomology operation of positive degree that takes $w^2\in H^4(D_3(S^4)/\mathbb Z_3; \mathbb F_3)$ to a nonzero element of dimension $\le 7$. The Steenrod reduced powers fail to do so (see Section~\ref{tri-path-0mod4-sec}) and moreover squaring does not work (see Remark~\ref{F3Rn-rem}). 

\begin{rem}
The rest of this section replaces $\Sigma_3$ by its subgroup $\mathbb Z_3$, so we try to solve positively a relaxed version of Question~\ref{mult-path} without any success. 
\end{rem}

A relevant example is the original Hopf map $S^7\to S^4$ (considered as the natural projection of unit vectors in $\mathbb H^2$ to $\mathbb HP^1 = S^4$), which is $\mathbb Z_3$-equivariant (we may assume $\mathbb Z_3\subset \mathrm{Sp}(1)$) and maps the generator $\alpha\in H^4(S^4; \mathbb F_3)$ to a nonzero multiple of 
$$
w^2\in H^4(S^7/\mathbb Z_3; \mathbb F_3) = H^4(\mathbb Z_3; \mathbb F_3).
$$
So the class $w^2\in H^4(S^7/\mathbb Z_3; \mathbb F_3)$ is annihilated by any cohomology operation of positive degree. The sphere $S^7$ has the same mod $3$ cohomology (and $\mathbb Z_3$-equivariant cohomology mod $3$) as $D_3(S^4)$. We note one difference, 
%without any idea if it may help
but do not know if it will prove helpful: The action of $\mathbb Z_3$ on $S^7$ cannot be extended to an action of $\Sigma_3\supset \mathbb Z_3$.

In an attempt to give a positive answer for Question~\ref{mult-path} for $m=3$, $n=4$ we may try to construct a $\mathbb Z_3$-equivariant map $D_3(S^4)\to S^4$ homotopic to $\pi_1\circ \iota$ as in Lemma~\ref{reduction-tri}. In the case $n=2$ such a map $D_3(S^2)\to S^2$ is easily constructed by considering two ``centers'' of the triangle and selecting one of them according to the orientation (here $\mathbb Z_3$-equivariance is essential). 

But the situation is more difficult in the case $n=4$. For a triple $(a,b,c)\in D_3(S^4)$ denote its $2$-dimensional linear span by $\alpha(a,b,c)$ and its $3$-dimensional orthogonal complement by $\beta(a,b,c)$. It would be sufficient to find a $\mathbb Z_3$-equivariant map $h : D_3(S^4)\to S^4$ such that $h(a,b,c)\in \beta(a,b,c)$ for any triple. But G.~Whitehead showed in~\cite{wh1963} that we cannot assign continuously to any pair $(x_1, x_2)\in V_{n+1, 2}$ another vector $x_3$ orthogonal to $x_1$ and $x_2$ except for the cases $n=2,6$, even without assuming any invariance under a group action on $V_{n+1,2}$.

%RK The theorem is replaced by references to stronger results of Whitehead.

%The rest of Section 8, concerning the cohomology of $D_3(S^4)/D_6$ has been omitted (and has been moved below following the paper.


\begin{thebibliography}{99}

%\bibitem{admi2004} A.~Adem and R.J.~Milgram. Cohomology of Finite Groups, Second Edition, Grundlehren der Mathematischen Wissenschaften 309, Springer-Verlag, Berlin, 2004.

\bibitem{bgrt2010}
I.~Basabe, J.~Gonz\'alez, Yu.B.~Rudyak, and D.~Tamaki. Higher topological complexity and homotopy dimension of configuration spaces on spheres. // \href{http://arxiv.org/abs/1009.1851}{	arXiv:1009.1851} (2010).

\bibitem{car1950}
H.~Cartan. Espaces avec groupes d'op\'erateurs. I: Notions pr\'eliminaires; II: La suite spectrale; applications. // S\'eminaire Henri Cartan, tome 3, (1950--1951), expos\'e 11, pp. 1--11 and expos\'e 12, pp. 1--10.

\bibitem{ct1991}
F.R.~Cohen and L.R.~Taylor. On the representation theory associated to the cohomology of configuration spaces. // Proceedings of an International Conference on Algebraic Topology, 4--11 July 1991, Oaxtepec, Contemporary Mathematics 146 (1993), 91--109.

%\bibitem{ce1956}
%H.~Cartan and S.~Eilenberg. Homological Algebra, Princeton University Press, 1956.

%RK Now we reference the 3-author paper.
\bibitem{dgl2011}
C.~Dom\'{\i}nguez, J.~Gonz\'alez, and P.~Landweber. The integral cohomology of configuration spaces of pairs of points in real projective spaces. // Forum Mathematicum (2012) DOI:10.1515/FORM.2011.145; also available at \href{http://arxiv.org/abs/1106.4593}{arXiv:1106.4593} (2011).

\bibitem{fh2001}
E.R.~Fadell and S.Y.~Husseini. Geometry and Topology of Configuration Spaces, Springer, 2001.

\bibitem{far2003}
M.~Farber. Topological complexity of motion planning. // Discrete and Computational Geometry 29 (2003), 211--221.

\bibitem{far2008}
M.~Farber. Invitation to topological robotics. Zurich Lectures in Advanced Mathematics, EMS, Z\"{u}rich, 2008.

%\bibitem{lg2010}
%J.~Gonz\'alez and P.~Landweber. The integral cohomology groups of configuration spaces of pairs of points in real projective spaces. // \href{http://arxiv.org/abs/1004.0746v2}{arXiv:1004.0746v2}, (2010).

\bibitem{han1993}
D.~Handel. On products in the cohomology of the dihedral groups. // T\^ohoku Math. J. 2, 45:1 (1993), 13--42.

%\bibitem{hasa2002}
%T.~Hayami and K.~Sanada. Cohomology ring of the generalized quaternion group with coefficients in an order. // Comm. Algebra 30:8 (2002), 3611--3628.

\bibitem{hung1990}
Nguy\^ en~H.V.~Hung. The mod $2$ equivariant cohomology algebras of configuration spaces. // Pacific Jour. Math. 143:2 (1990), 251--286.

%\bibitem{kar2009}
%R.N.~Karasev. The genus and the category of configuration spaces. // Topology and its Applications 156:14 (2009), 2406--2415.

%\bibitem{land2011}
%P.~Landweber. Does the fibration $\varepsilon_3 : Y_3(S^2) \to B_3(S^2)$ have a global section? // manuscript, (2011).

\bibitem{lea1997}
I.J.~Leary. On the integral cohomology of wreath products. // Journal of Algebra 198:1 (1997), 184--239.

\bibitem{naka1961}
M.~Nakaoka. Homology of the infinite symmetric group. // Ann. of Math. 73 (1961), 229--257.

\bibitem{ru2010}
Yu.B.~Rudyak. \textit{On higher analogs of topological complexity.} Topology and its Applications 157:5 (2010), 916--920; with erratum in: Topology and its Applications 157:6 (2010), p.~1118.

\bibitem{schw1966}
A.S.~\v{S}var\v{c} (A.S.~Schwarz). The genus of a fiber space (in Russian). // Trudy Moskov. Mat. Obsc. 10 (1961), 217--272 and 11 (1962), 99--126; English translation in Amer. Math. Soc. Trans., Ser. II 55 (1966), 49--140.  

\bibitem{wh1963}
G.W.~Whitehead. Note on cross sections of Stiefel manifolds. // Comment. Math. Helv. 37
(1963), 239--240.

%\bibitem{zwe1965}
%P.~Zvengrowski. A $3$-fold vector product in $\mathbb R^8$. // Comment. Math. Helv. 40:1 (1965--66), 149--152.

\end{thebibliography}
\end{document}